\newtheorem{theorem}{Theorem}[section]
\newtheorem{lemma}{Lemma}[section]
\newtheorem{remark}{Remark}[section]
\newtheorem{corollary}{Corollary}[section]
\numberwithin{equation}{section}
\email{xhevat.krasniqi@uni-pr.edu}
\email{korus.peter@szte.hu}
\email{B.Szal@wmie.uz.zgora.pl}
\subjclass[2020]{40G05; 41A25; 42A10.}
\keywords{Degree of approximation, H\"older metric, modulus of continuity, delayed arithmetic
mean, Minkowski inequality, Fourier series.}
\dedicatory{}
\begin{document}
\title[%
\uppercase{Approximation by double second type delayed arithmetic
mean}]{%
\uppercase{Approximation by double second type delayed arithmetic
mean of periodic functions in $H_{p}^{(\omega, \omega)}$ space}}
\author[Xh. Z. Krasniqi]{Xh. Z. Krasniqi}
\address{Faculty of Education \\
\indent University of Prishtina "Hasan Prishtina" \\
\indent Avenue "Mother Theresa" 5, 10000 Prishtina \\
\indent Kosovo}
\author[P. K\'orus]{P. K\'orus}
\address{Institute of Applied Pedagogy \\
\indent Juh\'asz Gyula Faculty of Education \\
\indent University of Szeged \\
\indent Hattyas utca 10, H-6725 Szeged \\
\indent Hungary}
\author[B. Szal]{B. Szal}
\address{Faculty of Mathematics, Computer Science and Econometrics \\
\indent University of Zielona G\'{o}ra \\
\indent ul. Szafrana 4a, 65-516 Zielona G\'{o}ra \\
\indent Poland}
\date{}

\begin{abstract}
In this paper, we give a degree of approximation of a function in the space $H_{p}^{(\omega, \omega)}$ by using the second type double delayed arithmetic means of its Fourier series. Such degree of approximation is expressed  via two functions of moduli of continuity type. To obtain one more general result, we used the even-type double delayed arithmetic means of Fourier series as well.
\end{abstract}

\maketitle



\section{Concise historical background and motivation}

On one hand, the approximation of $2\pi$-periodic and integrable functions by their Fourier series in the H\"{o}lder metric has been studied widely and consistently in many papers. Das at al. studied the degree of approximation of functions by matrix means of their Fourier series in the generalized H\"{o}lder metric \cite{DGR}, generalizing many previous known results. Again, Das at al. \cite{DNR} studied the rate of convergence problem of Fourier series in a new Banach space of functions conceived as a generalization of the spaces introduced by Pr\"{o}ssdorf \cite{SP} and Leindler \cite{L1}. Afterwards, Nayak at al. \cite{NDR} studied the rate of convergence problem of the Fourier series by delayed arithmetic mean in the generalized H\"{o}lder metric space which was earlier introduced in \cite{DNR} and obtaining a sharper estimate of Jackson's order which is the main objective of their result. Moreover, De\v{g}er \cite{D} determined the degree of approximation of functions by matrix means of their Fourier series in the same space of functions introduced in \cite{DNR}. In particular, he extended some results of Leindler \cite{L} and some other results by weakening the monotonicity conditions in the results obtained by Singh and Sonker \cite{SS} for some classes of numerical sequences introduced by Mohapatra and Szal \cite{MS}.  Leindler's results obtained in \cite{L}, are generalized in \cite{XhK} by first author of the present paper, for functions from a Banach space, mainly using the generalized N\"{o}rlund and Riesz means. Very recently,  Kim \cite{KIM1} presented a generalized result of a particular case of a result obtained previously in \cite{NDR}. In Kim's result is treated the degree of approximation of functions in the same generalized H\"{o}lder metric, but using the so-called even-type delayed arithmetic mean of Fourier series.   

On the other hand, results on approximation of bivariate integrable functions and $2\pi$-periodic in each variable by their double Fourier series in the H\"{o}lder metric, the interested reader can find in \cite{UD2}, \cite{XhK2} and \cite{NH}. In all results reported in these papers, we came across to the degree of approximation of functions by various means of their double Fourier series and in which the quantity of the form $\mathcal{O}{(\log n)}$ appears. Involving such quantity produces a degree of approximation which is not of Jackson's order. This weakness motivated us to consider some means of double Fourier series which will overshoots it. Whence, we are going to investigate the degree of approximation of bivariate integrable functions and $2\pi$-periodic in each variable by their double Fourier series in the generalized H\"{o}lder metric, by motivation of removing the quantities of the form $\mathcal{O}{(\log n)}$ and obtaining the degree of approximation of Jackson's order, which is the aim of the paper.

Closing this section, for comparing of two quantities $u$ and $v>0$, throughout this paper we write $u=\mathcal{O}(v)$, whenever there exists a positive constant $c$ such that $u\leq cv$.

\section{Introduction and preliminaries}

By $L_p(T^2)$, $p\geq 1$, we denote the space of all functions $f(x,y)$
integrable with $p$-power on $T^2:=(0,2\pi)\times (0,2\pi)$, and with norm 
\begin{equation*}
\|f\|_p:=\left(\frac{1}{(2\pi)^2}\int_{0}^{2\pi}\int_{0}^{2\pi}|f(x,y)|^p
dxdy\right)^{1/p}.
\end{equation*}
If $f\in L_p(T^2)$, $p\geq 1$, then $\omega _i$, $(i=1,2)$ are considered as
moduli of continuity if $\omega _i$ are two positive non-decreasing
continuous functions on $[0,2\pi]$ with properties
\begin{enumerate}
\item[(i)] $\omega _i(0)=0$,

\item[(ii)] $\omega _i(\delta_1+\delta_2)\leq \omega _i(\delta_1)+\omega
_i(\delta_2)$,

\item[(iii)] $\omega _{i}(\lambda \delta )\leq (\lambda +1)\omega
_{i}(\delta )$, $\lambda \geq 0$.
\end{enumerate}
We define the space $H_{p}^{(\omega_1, \omega_2)}$ by 
\begin{equation*}
H_{p}^{(\omega_1, \omega_2 )}:=\left\{f\in L^{p}(T^2), p\geq 1:
A(f;\omega_1, \omega_2 )<\infty \right\},
\end{equation*}
where 
\begin{equation*}
A(f;\omega_1, \omega_2 ):=\sup_{t_1\neq 0,\,\,t_2\neq 0 }\frac{\|f(x +t_1,y
+t_2)-f(x,y)\|_{p}}{\omega_1 (|t_1|)+\omega_2 (|t_2|)}
\end{equation*}
and the norm in the space $H_{p}^{(\omega_1, \omega_2)}$ is defined by 
\begin{equation*}
\|f\|_{p}^{(\omega_1, \omega_2)}:=\|f\|_{p}+A(f;\omega_1, \omega_2 ).
\end{equation*}
If $\omega_1$, $\omega_2$, $v_1$ and $v_2$ are moduli of continuity so that
the two-variable function $\frac{\omega_1 (t_1)+\omega_2 (t_2)}{%
v_1(t_1)+v_2(t_2)}$ has a maximum $M$ on $T^2$, then it is easy to see that 
\begin{equation*}
\|f\|_{p}^{(v_1,v_2)}\leq \max \left(1,M\right)\|f\|_{p}^{(\omega_1,
\omega_2)},
\end{equation*}
which shows that in this case, for the given spaces $H_{p}^{(\omega_1,
\omega_2)}$ and $H_{p}^{(v_1, v_2)}$ we have 
\begin{equation*}
H_{p}^{(\omega_1, \omega_2)}\subseteq H_{p}^{(v_1, v_2)}\subseteq L_p \quad
(p\geq 1).
\end{equation*}

We write 
\begin{equation*}
\Omega_p(\delta_1,\delta_2;f):=\sup_{0\leq h_1\leq \delta_1; 0\leq h_2\leq
\delta_2}\|f(x+h_1,y+h_2)-f(x,y)\|_{p}
\end{equation*}
for the integral modulus of continuity of $f(x,y)$, and whenever 
\begin{equation*}
\|f(x +t_1,y +t_2)-f(x,y)\|_{p}=\mathcal{O}\left(\omega_1 (|t_1|)+\omega_2
(|t_2|)\right)
\end{equation*}
we write $f\in \text{Lip}(\omega_1,\omega_2,p)$, that is 
\begin{equation*}
\text{Lip}(\omega_1,\omega_2,p)=\left\{f\in L_p(T^2): \|f(x +t_1,y
+t_2)-f(x,y)\|_{p}=\mathcal{O}\left(\omega_1 (|t_1|)+\omega_2
(|t_2|)\right)\right\}.
\end{equation*}
Clearly, for $\omega_1(t_1)=\mathcal{O}\left(t_1^{\alpha}\right)$ and $%
\omega_2(t_2)=\mathcal{O}\left(t_2^{\beta}\right)$, $0<\alpha \leq 1$, $%
0<\beta \leq 1$, the class $\text{Lip}(\omega_1,\omega_2,p)$ reduces to the
class $\text{Lip}(\alpha,\beta,p)$, that is 
\begin{equation*}
\text{Lip}(\alpha,\beta, p)=\left\{f\in L^p(T^2): \|f(x +t_1,y
+t_2)-f(x,y)\|_{p}=\mathcal{O}\left(t_1^{\alpha}\right)+\mathcal{O}%
\left(t_2^{\beta}\right)\right\}.
\end{equation*}
Then for $1 \geq \alpha \geq \gamma \geq 0$ and $1 \geq \beta \geq \delta
\geq 0$, by noting $\frac{t_1^{\alpha}+t_2^{\beta}}{t_1^{\gamma}+t_2^{\delta}%
}$ is bounded on $T^2$, we have 
\begin{equation*}
\text{Lip}(\alpha,\beta,p)\subseteq \text{Lip}(\gamma,\delta,p) \subseteq
L_p \quad (p\geq 1).
\end{equation*}
If $f(x,y)$ is a continuous function periodic in both variables with period $%
2\pi$ and $p=\infty$, then the class $\text{Lip}(\alpha,\beta,p)$ reduces to
the H\"older class $\text{H}_{(\alpha,\beta)}$ (also called Lipschitz
class), that is 
\begin{equation*}
\text{H}_{(\alpha,\beta)}=\left\{f: |f(x +t_1,y +t_2)-f(x,y)|=\mathcal{O}%
\left(t_1^{\alpha}\right)+\mathcal{O}\left(t_2^{\beta}\right)\right\}.
\end{equation*}
It is verified that $\text{H}_{(\alpha,\beta)}$ is a Banach space (see \cite%
{UD2}) with the norm $\|f \|_{\alpha,\beta}$ defined by 
\begin{equation*}
\|f \|_{\alpha,\beta}=\|f\|_{C}+\sup_{t_1\neq 0,\,\,t_2\neq 0 }\frac{|f(x
+t_1,y +t_2)-f(x,y)|}{|t_1|^{\alpha}+|t_2|^{\beta}},
\end{equation*}
where 
\begin{equation*}
\|f\|_{C}=\sup_{(x,y)\in T^2}|f(x,y)|.
\end{equation*}

Let $f(x,y)\in L^p(T^2)$ be a $2\pi$-periodic function with respect to each
variable, with its Fourier series 
\begin{align*}
& f(x,y)\sim
\sum_{k=0}^{\infty}\sum_{\ell=0}^{\infty}\lambda_{k,\ell}(a_{k,\ell} \cos kx
\cos \ell y+b_{k,\ell} \sin kx \cos \ell y \\
& \hspace{3cm}+c_{k,\ell} \cos kx \sin \ell y+d_{k,\ell} \sin kx \sin \ell
y)
\end{align*}
at the point $(x,y)$, where 
\begin{align*}
\lambda _{k,\ell} & = \left\{ 
\begin{array}{rcl}
\frac{1}{4}, & \mbox{if} & k=\ell=0, \\ 
\frac{1}{2}, & \mbox{if} & k=0, \,\ell>0;\,\,\,\mbox{or} \,\,\, k>0,
\,\ell=0, \\ 
1, & \mbox{if} & k, \,\ell>0;%
\end{array}%
\right. \\
a_{k,\ell} & = \frac{1}{\pi^{2}}\int_{-\pi}^{\pi}\int_{-\pi}^{\pi}f(u, v)
\cos ku \cos \ell v \,du dv, \\
b_{k,\ell} & = \frac{1}{\pi^{2}}\int_{-\pi}^{\pi}\int_{-\pi}^{\pi}f(u, v)
\sin ku \cos \ell v \,du dv, \\
c_{k,\ell} & = \frac{1}{\pi^{2}}\int_{-\pi}^{\pi}\int_{-\pi}^{\pi}f(u, v)
\cos ku \sin \ell v \,du dv, \\
d_{k,\ell} & = \frac{1}{\pi^{2}}\int_{-\pi}^{\pi}\int_{-\pi}^{\pi}f(u,
v)\sin ku \sin \ell v\, du dv,\quad k,\,\ell\in \mathbb{N}\cup\{0\},
\end{align*}
and whose partial sums are 
\begin{align*}
& S_{n,m}(f;x,y)= \sum_{k=0}^{n}\sum_{\ell=0}^{m}\lambda_{k,\ell}(a_{k,\ell}
\cos kx \cos \ell y+b_{k,\ell} \sin kx \cos \ell y \\
& \hspace{3.8cm}+c_{k,\ell} \cos kx \sin \ell y+d_{k,\ell} \sin kx \sin \ell
y),\quad n,m\geq 0 .
\end{align*}

To reveal our intention, we recall some other notations and notions.

Let $\sum_{k=0}^{\infty}\sum_{\ell=0}^{\infty}u_{k,\ell}$ be an infinite
double series with its sequence of arithmetic mean $\{\sigma_{m,n}\}$, where 
\begin{equation*}
\sigma_{m,n}=\frac{1}{(m+1)(n+1)}\sum_{k=0}^{m}\sum_{\ell=0}^{n}U_{k,\ell}
\end{equation*}
and $U_{k,\ell}:=\sum_{i=0}^{k}\sum_{j=0}^{\ell}u_{i,j}.$

We define the \textit{Double Delayed Arithmetic Mean} $\sigma_{m,k;n,\ell}$
by (see \cite{FW}): 
\begin{align}  \label{eq1}
\begin{split}
\sigma_{m,k;n,\ell}:=&\left(1+\frac{m}{k}\right)\left(1+\frac{n}{\ell}%
\right)\sigma_{m+k-1,n+\ell-1}-\left(1+\frac{m}{k}\right)\frac{n}{\ell}%
\sigma_{m+k-1,n-1} \\
& -\frac{m}{k}\left(1+\frac{n}{\ell}\right)\sigma_{m-1,n+\ell-1}+\frac{mn}{%
k\ell}\sigma_{m-1,n-1},
\end{split}
\end{align}
where $k$ and $\ell$ are positive integers.

If $k$ tends to $\infty$ with $m$ in such a way that $\frac{m}{k}$ is
bounded, and $\ell$ tends to $\infty$ with $n$ in such a way that $\frac{n}{%
\ell}$ is also bounded, then $\sigma_{m,k;n,\ell}$ defines a method of
summability which is at least as strong as the well-known $(C,1,1)$
summablity. This means that if $\sigma_{m,n}\to \mu$, then $%
\sigma_{m,k;n,\ell}\to \mu$ as well. This important fact follows from (\ref%
{eq1}) if we set $\sigma_{m,n}=\mu+\xi_{m,n}$, where $\xi_{m,n}\to 0$ as $%
m,n\to \infty$. Introducing this mean we expect to be useful in
applications, particularly in approximating of $2\pi$ periodic functions in
two variables.

We note that for $k=\ell=1$ we obtain $\sigma_{m,1;n,1}=U_{m,n}$, while for $%
m=n=0$ we get $\sigma_{0,k;0,\ell}=\sigma_{k-1,\ell-1}$. Moreover, for $k=m$
and $\ell=n$, we get 
\begin{align}  \label{eq2}
\sigma_{m,m;n,n}=4\sigma_{2m-1,2n-1}-2\sigma_{2m-1,n-1}-2\sigma_{m-1,2n-1}+%
\sigma_{m-1,n-1}
\end{align}
that we name as the \textit{first type} Double Delayed Arithmetic Mean.

However, in this research paper we will take $k=2m$ and $\ell=2n$ in the
Double Delayed Arithmetic Mean $\sigma_{m,k;n,\ell}$ to obtain 
\begin{align}  \label{eq3}
\sigma_{m,2m;n,2n}=\frac{1}{4}\left(9\sigma_{3m-1,3n-1}-3\sigma_{3m-1,n-1}-3%
\sigma_{m-1,3n-1}+\sigma_{m-1,n-1}\right).
\end{align}
We name these particular sums as the \textit{second type} Double Delayed
Arithmetic Mean.

By $\sigma _{m,n}(f;x,y)$ and $\sigma _{m,2m;n,2n}(f;x,y)$ we denote the
arithmetic mean and the second type Double Delayed Arithmetic Mean for $%
S_{k,\ell }(f;x,y)$, respectively. It is well-known (see e.g. \cite[page 4]%
{KIM}) that the double Fej\'{e}r kernel is 
\begin{equation*}
F_{m,n}(t_{1},t_{2}):=\frac{4}{(m+1)(n+1)}\left( \frac{\sin \frac{(m+1)t_{1}%
}{2}\sin \frac{(n+1)t_{2}}{2}}{4\sin \frac{t_{1}}{2}\sin \frac{t_{2}}{2}}%
\right) ^{2},
\end{equation*}%
which we will use it in its equivalent form 
\begin{equation}\label{eq4}
F_{m,n}(t_{1},t_{2}):=\frac{1}{(m+1)(n+1)}\frac{(1-\cos (m+1)t_{1})(1-\cos
(n+1)t_{2})}{\left( 4\sin \frac{t_{1}}{2}\sin \frac{t_{2}}{2}\right) ^{2}},
\end{equation}%
while the arithmetic mean $\sigma _{m,n}(f;x,y)$ is 
\begin{equation}\label{eq5}
\sigma _{m,n}(f;x,y)=\frac{1}{\pi ^{2}}\int_{0}^{\pi }\int_{0}^{\pi
}h_{x,y}(t_{1},t_{2})F_{m,n}(t_{1},t_{2})\,dt_{1}dt_{2},
\end{equation}%
where 
\begin{equation*}
h_{x,y}(t_{1},t_{2}):=f(x+t_{1},y+t_{2})+f(x-t_{1},y+t_{2})+f(x+t_{1},y-t_{2})+f(x-t_{1},y-t_{2}).
\end{equation*}%
Furthermore, using (\ref{eq4}) successively, we have 
\begin{align*}
F_{3m-1,3n-1}(t_{1},t_{2})& =\frac{(1-\cos (3mt_{1}))(1-\cos (3nt_{2}))}{%
9mn\left( 4\sin \frac{t_{1}}{2}\sin \frac{t_{2}}{2}\right) ^{2}}, \\
F_{3m-1,n-1}(t_{1},t_{2})& =\frac{(1-\cos (3mt_{1}))(1-\cos (nt_{2}))}{%
3mn\left( 4\sin \frac{t_{1}}{2}\sin \frac{t_{2}}{2}\right) ^{2}}, \\
F_{m-1,3n-1}(t_{1},t_{2})& =\frac{(1-\cos (mt_{1}))(1-\cos (3nt_{2}))}{%
3mn\left( 4\sin \frac{t_{1}}{2}\sin \frac{t_{2}}{2}\right) ^{2}}, \\
F_{m-1,n-1}(t_{1},t_{2})& =\frac{(1-\cos (mt_{1}))(1-\cos (nt_{2}))}{%
mn\left( 4\sin \frac{t_{1}}{2}\sin \frac{t_{2}}{2}\right) ^{2}}.
\end{align*}%
Thus, we can write 
\begin{equation}
\begin{split}
F_{m,2m;n,2n}(t_{1},t_{2}):=& \frac{1}{4}\left(
9F_{3m-1,3n-1}(t_{1},t_{2})-3F_{3m-1,n-1}(t_{1},t_{2})\right. \\
& \quad \left. -3F_{m-1,3n-1}(t_{1},t_{2})+F_{m-1,n-1}(t_{1},t_{2})\right) \\
=& \frac{1}{4mn\left( 4\sin \frac{t_{1}}{2}\sin \frac{t_{2}}{2}\right) ^{2}}%
\left( (1-\cos (3mt_{1}))(1-\cos (3nt_{2}))\right. \\
& \left. -(1-\cos (3mt_{1}))(1-\cos (nt_{2}))-(1-\cos (mt_{1}))(1-\cos
(3nt_{2}))\right. \\
& \left. +(1-\cos (mt_{1}))(1-\cos (nt_{2}))\right) \\
=& \frac{1}{4mn\left( 4\sin \frac{t_{1}}{2}\sin \frac{t_{2}}{2}\right) ^{2}}%
\left( \cos (mt_{1})-\cos (3mt_{1})\right) \left( \cos (nt_{2})-\cos
(3nt_{2})\right) \\
=& \frac{S(t_{1},t_{2})}{mn\left( 4\sin \frac{t_{1}}{2}\sin \frac{t_{2}}{2}%
\right) ^{2}},
\end{split}
\label{eq6}
\end{equation}%
where%
\begin{equation*}
S(t_{1},t_{2}):=\sin 2mt_{1}\sin mt_{1}\sin 2nt_{2}\sin nt_{2}.
\end{equation*}
Therefore, using (\ref{eq5}) and (\ref{eq6}), we get the second type Double
Delayed Arithmetic Mean as 
\begin{equation*}
\sigma_{m,2m;n,2n}(f;x,y)=\frac{1}{mn\pi^2}\int_{0}^{\pi}\int_{0}^{%
\pi}h_{x,y}(t_1,t_2)\frac{S(t_1,t_2)}{\left(4\sin \frac{t_1}{2}\sin \frac{t_2%
}{2}\right)^2}dt_1dt_2.
\end{equation*}

Throughout this paper we agree to put: $h_1:=h_1(m)=\frac{\pi}{m}$, $%
h_2:=h_2(n)=\frac{\pi}{n}$,
\begin{align*}
\phi_{x,y}(t_1,t_2) & := \frac{1}{4} ( f(x+t_1,y+t_2) + f(x-t_1,y+t_2) \\
& \hspace{1cm} + f(x+t_1,y-t_2) + f(x-t_1,y-t_2) - 4 f(x,y) ), \\
H_{x,z_1,y,z_2}(t_1,t_2) & :=\phi_{x+z_1,y+z_2}(t_1,t_2)-\phi_{x,y}(t_1,t_2),
\end{align*}
and 
\begin{align*}
D_{m,n}(x,y):=\sigma_{m,2m;n,2n}(f;x,y)-f(x,y).
\end{align*}

In order to achieve to our aim, we need some helpful lemmas given in next section. 


\section{Auxiliary Results}

\begin{lemma}
(The generalized Minkowski inequality \cite[p. 21]{SMN})\label{le1} For a
function $g(x_{1},y_{1})$ given on a measurable set $E:=E_{1}\times
E_{2}\subset \mathbb{R}_{2}$, where $x_{1}=(x_{1},x_{2},\dots ,x_{m})$ and $%
y_{1}=(x_{m+1},x_{m+2},\dots ,x_{n})$, the following inequality holds: 
\begin{equation*}
\left( \int_{E_{1}}\left\vert \int_{E_{2}}g(x_{1},y_{1})dy_{1}\right\vert
^{p}dx_{1}\right) ^{\frac{1}{p}}\leq \int_{E_{2}}\left(
\int_{E_{1}}\left\vert g(x_{1},y_{1})\right\vert ^{p}dy_{1}\right) ^{\frac{1%
}{p}}dx_{1},
\end{equation*}%
for those values of $p$ for which the right-hand side of this inequality is
finite.
\end{lemma}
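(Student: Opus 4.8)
The plan is to derive this from the classical Minkowski integral inequality via the standard duality argument, splitting into the cases $p=1$ and $p>1$. When $p=1$ the inequality is immediate: by the triangle inequality for integrals together with Tonelli's theorem (legitimate since $|g|\ge 0$),
\[
\int_{E_{1}}\left|\int_{E_{2}}g(x_{1},y_{1})\,dy_{1}\right|dx_{1}\le\int_{E_{1}}\int_{E_{2}}|g(x_{1},y_{1})|\,dy_{1}\,dx_{1}=\int_{E_{2}}\left(\int_{E_{1}}|g(x_{1},y_{1})|\,dx_{1}\right)dy_{1},
\]
which is precisely the right-hand side in the case $p=1$.

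For $p>1$, put $F(x_{1}):=\int_{E_{2}}g(x_{1},y_{1})\,dy_{1}$ and let $p'=p/(p-1)$. Assuming first that $I:=\int_{E_{1}}|F(x_{1})|^{p}\,dx_{1}$ is finite and positive, I would write $|F|^{p}=|F|^{p-1}|F|$, use $|F(x_{1})|\le\int_{E_{2}}|g(x_{1},y_{1})|\,dy_{1}$, interchange the order of integration by Tonelli, and then apply H\"older's inequality in the variable $x_{1}$ with exponent $p'$ on the factor $|F|^{p-1}$ and exponent $p$ on $|g|$. Since $(p-1)p'=p$, this yields
\[
I\le\int_{E_{2}}\left(\int_{E_{1}}|F(x_{1})|^{p}\,dx_{1}\right)^{1/p'}\left(\int_{E_{1}}|g(x_{1},y_{1})|^{p}\,dx_{1}\right)^{1/p}dy_{1}=I^{1/p'}\int_{E_{2}}\left(\int_{E_{1}}|g(x_{1},y_{1})|^{p}\,dx_{1}\right)^{1/p}dy_{1}.
\]
Dividing through by $I^{1/p'}\in(0,\infty)$ and using $1-1/p'=1/p$ gives the asserted bound.

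The one point that needs care is the a priori finiteness of $I$, since the hypothesis only guarantees that the right-hand side $R:=\int_{E_{2}}\bigl(\int_{E_{1}}|g(x_{1},y_{1})|^{p}\,dx_{1}\bigr)^{1/p}\,dy_{1}$ is finite. To handle this rigorously I would rerun the computation above with $|F(x_{1})|$ replaced by its truncation $\min\{|F(x_{1})|,N\}$ and with the $x_{1}$-integration restricted to a subset $A\subseteq E_{1}$ of finite measure; then every intermediate integral is finite — in particular $\int_{A}\int_{E_{2}}|g|\,dy_{1}\,dx_{1}\le|A|^{1/p'}R<\infty$ by Tonelli and H\"older — the division step is legitimate, and one obtains $\int_{A}\min\{|F|,N\}^{p}\,dx_{1}\le R^{p}$ with a bound uniform in $N$ and $A$. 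Letting $N\to\infty$ and then exhausting $E_{1}$ by finite-measure sets, monotone convergence upgrades this to $I\le R^{p}<\infty$, whence $I^{1/p}\le R$ is exactly the claim. Beyond this bookkeeping the argument is entirely standard; the only (minor) obstacle is arranging the truncation so that no integral is ever $+\infty$ before the final division step.
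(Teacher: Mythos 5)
Your proof is correct. Note first that the paper itself gives no proof of this lemma: it is quoted verbatim from Nikol'skii's book (the cited reference), so there is no in-paper argument to compare against; indeed the displayed inequality in the paper even contains a typo (the integration variables $dx_1$ and $dy_1$ on the right-hand side are interchanged relative to the domains $E_2$ and $E_1$), and you have correctly worked with the intended statement. Your argument is the standard one: the case $p=1$ by the triangle inequality and Tonelli, and the case $p>1$ by writing $|F|^p=|F|^{p-1}|F|$, interchanging the order of integration, and applying H\"older with exponents $p'$ and $p$ so that the factor $I^{1/p'}$ can be divided out. The one genuinely delicate point — that $I$ must be known finite before dividing by $I^{1/p'}$ — is handled properly by truncating $|F|$ at level $N$ and restricting to finite-measure subsets $A\subseteq E_1$ (legitimate since Lebesgue measure on $E_1\subset\mathbb{R}^m$ is $\sigma$-finite, which also justifies Tonelli), obtaining the uniform bound $\int_A\min\{|F|,N\}^p\,dx_1\le R^p$ and passing to the limit by monotone convergence. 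This is a complete and rigorous proof of the auxiliary lemma that the paper takes on faith from the literature.
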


\begin{lemma}
\label{le2} Let $\omega_1$, $\omega_2$, $v_1$ and $v_2$ be moduli of
continuity so that $\frac{\omega_1 (t_1)}{v_1(t_1)}$ is non-decreasing in $%
t_1$, $\frac{\omega_2 (t_2)}{v_2(t_2)}$ is non-decreasing in $t_2$, and $%
f\in H_{p}^{(\omega_1, \omega_2)}$. Then for $0< t_1\leq \pi$, $0< t_2\leq
\pi$, and $p\geq 1$,
\begin{enumerate}
\item[(i)] $\|\phi_{x,y}(t_1,t_2)\|_p=\mathcal{O}\left(\omega_1
(t_1)+\omega_2 (t_2)\right)$,

\item[(ii)] $\|H_{x,z_1,y,z_2}(t_1,t_2)\|_p=\mathcal{O}\left(\omega_1
(t_1)+\omega_2 (t_2)\right)$, \newline
$\|H_{x,z_1,y,z_2}(t_1,t_2)\|_p=\mathcal{O}\left(\omega_1 (|z_1|)+\omega_2
(|z_2|)\right)$, \newline
$\|H_{x,z_1,y,z_2}(t_1+h_1,t_2)\|_p=\mathcal{O}\left(\omega_1
(t_1+h_1)+\omega_2 (t_2)\right)$, \newline
$\|H_{x,z_1,y,z_2}(t_1+h_1,t_2)\|_p=\mathcal{O}\left(\omega_1
(|z_1|)+\omega_2 (|z_2|)\right)$, \newline
$\|H_{x,z_1,y,z_2}(t_1,t_2+h_2)\|_p=\mathcal{O}\left(\omega_1 (t_1)+\omega_2
(t_2+h_2)\right)$, \newline
$\|H_{x,z_1,y,z_2}(t_1,t_2+h_2)\|_p=\mathcal{O}\left(\omega_1
(|z_1|)+\omega_2 (|z_2|)\right)$, \newline
$\|H_{x,z_1,y,z_2}(t_1+h_1,t_2+h_2)\|_p=\mathcal{O}\left(\omega_1
(t_1+h_1)+\omega_2 (t_2+h_2)\right)$, \newline
$\|H_{x,z_1,y,z_2}(t_1+h_1,t_2+h_2)\|_p=\mathcal{O}\left(\omega_1
(|z_1|)+\omega_2 (|z_2|)\right)$.
\end{enumerate}
Moreover, if $\omega_1 = \omega_2$ and $v_1=v_2$, then
\begin{enumerate}
\item[(iii)] $\|H_{x,z_1,y,z_2}(t_1,t_2)\|_p=\mathcal{O}\left( (v_1 (|z_1|)
+ (v_1 (|z_2|)) \left( \frac{\omega_1 (t_1)}{v_1 (t_1)} + \frac{\omega_1
(t_2)}{v_1 (t_2)}\right) \right)$, \newline
$\|H_{x,z_1,y,z_2}(t_1+h_1,t_2)\|_p=\mathcal{O}\left( (v_1 (|z_1|) + (v_1
(|z_2|)) \left( \frac{\omega_1 (t_1+h_1)}{v_1 (t_1+h_1)} + \frac{\omega_1
(t_2)}{v_1 (t_2)}\right) \right)$, \newline
$\|H_{x,z_1,y,z_2}(t_1,t_2+h_2)\|_p=\mathcal{O}\left( (v_1 (|z_1|) + (v_1
(|z_2|)) \left( \frac{\omega_1 (t_1)}{v_1 (t_1)} + \frac{\omega_1 (t_2+h_2)}{%
v_1 (t_2+h_2)}\right) \right)$, \newline
$\|H_{x,z_1,y,z_2}(t_1+h_1,t_2+h_2)\|_p=\mathcal{O}\left( (v_1 (|z_1|) +
(v_1 (|z_2|)) \left( \frac{\omega_1 (t_1+h_1)}{v_1 (t_1+h_1)} + \frac{%
\omega_1 (t_2+h_2)}{v_1 (t_2+h_2)}\right) \right)$.

\item[(iv)] $\|H_{x,z_1,y,z_2}(t_1,t_2)-H_{x,z_1,y,z_2}(t_1+h_1,t_2)\|_p 
\newline
=\mathcal{O}\left( (v_1 (|z_1|) + (v_1 (|z_2|)) \left( \frac{\omega_1 (h_1)}{%
v_1 (h_1)} + \frac{\omega_1 (h_2)}{v_1 (h_2)}\right) \right)$, \newline
$\|H_{x,z_1,y,z_2}(t_1,t_2)-H_{x,z_1,y,z_2}(t_1,t_2+h_2)\|_p\newline
=\mathcal{O}\left( (v_1 (|z_1|) + (v_1 (|z_2|)) \left( \frac{\omega_1 (h_1)}{%
v_1 (h_1)} + \frac{\omega_1 (h_2)}{v_1 (h_2)}\right) \right)$, \newline
$\|H_{x,z_1,y,z_2}(t_1,t_2+h_2)-H_{x,z_1,y,z_2}(t_1+h_1,t_2+h_2)\|_p\newline
=\mathcal{O}\left( (v_1 (|z_1|) + (v_1 (|z_2|)) \left( \frac{\omega_1 (h_1)}{%
v_1 (h_1)} + \frac{\omega_1 (h_2)}{v_1 (h_2)}\right) \right)$, \newline
$\|H_{x,z_1,y,z_2}(t_1+h_1,t_2)-H_{x,z_1,y,z_2}(t_1+h_1,t_2+h_2)\|_p\newline
=\mathcal{O}\left( (v_1 (|z_1|) + (v_1 (|z_2|)) \left( \frac{\omega_1 (h_1)}{%
v_1 (h_1)} + \frac{\omega_1 (h_2)}{v_1 (h_2)}\right) \right)$, \newline
$\|H_{x,z_1,y,z_2}(t_1,t_2)-H_{x,z_1,y,z_2}(t_1+h_1,t_2) -
H_{x,z_1,y,z_2}(t_1,t_2+h_2) + H_{x,z_1,y,z_2}(t_1+h_1,t_2+h_2) \|_p=%
\mathcal{O}\left( (v_1 (|z_1|) + (v_1 (|z_2|)) \left( \frac{\omega_1 (h_1)}{%
v_1 (h_1)} + \frac{\omega_1 (h_2)}{v_1 (h_2)}\right) \right)$.
\end{enumerate}
\end{lemma}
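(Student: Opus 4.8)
The plan is to rest everything on two facts: the translation‑invariance of the $L_p$‑norm on $T^2$ (periodicity of $f$), and the inequality $\|f(\cdot+s_1,\cdot+s_2)-f(\cdot,\cdot)\|_p\le A(f;\omega_1,\omega_2)\bigl(\omega_1(|s_1|)+\omega_2(|s_2|)\bigr)$ coming from $f\in H_p^{(\omega_1,\omega_2)}$; parts (iii)--(iv) are then obtained by combining two a priori estimates through a monotonicity argument.

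\emph{Parts (i) and (ii).} Write $4\phi_{x,y}(t_1,t_2)$ as the sum of the four differences $f(x\pm t_1,y\pm t_2)-f(x,y)$; taking $L_p$‑norms in $(x,y)$, each summand is $\mathcal{O}(\omega_1(t_1)+\omega_2(t_2))$ by the displayed inequality (the signs are immaterial since $\omega_i$ enters via $|\cdot|$), and the triangle inequality gives (i); the same two lines give, for all $u,w$ and all $a,b\in(0,2\pi]$, $\|\phi_{u,w}(a,b)\|_p=\mathcal{O}(\omega_1(a)+\omega_2(b))$. For (ii), use $H_{x,z_1,y,z_2}=\phi_{x+z_1,y+z_2}-\phi_{x,y}$: the lines bounded in terms of the $t$‑arguments follow from $\|H\|_p\le\|\phi_{x+z_1,y+z_2}(\cdot)\|_p+\|\phi_{x,y}(\cdot)\|_p$ and the bound just noted, since translating the base point does not change the $L_p$‑norm of $\phi$; the lines bounded in terms of $z_1,z_2$ follow from the regrouping
\[
4H_{x,z_1,y,z_2}(t_1,t_2)=\sum_{\pm,\pm}\bigl(f(x+z_1\pm t_1,y+z_2\pm t_2)-f(x\pm t_1,y\pm t_2)\bigr)-4\bigl(f(x+z_1,y+z_2)-f(x,y)\bigr),
\]
each of whose five groups has $L_p$‑norm $\mathcal{O}(\omega_1(|z_1|)+\omega_2(|z_2|))$. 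Replacing $t_1$ by $t_1+h_1$ and/or $t_2$ by $t_2+h_2$ affects neither computation, so all eight lines of (ii) follow.

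\emph{Parts (iii) and (iv).} Put $\omega=\omega_1=\omega_2$, $v=v_1=v_2$, and recall $\omega,v$ are non‑decreasing and $\omega/v$ is non‑decreasing. Every quantity $Q$ appearing in (iii)--(iv) admits \emph{two} estimates at once: a ``$z$‑estimate'' $Q=\mathcal{O}(\omega(|z_1|)+\omega(|z_2|))$, and a ``short‑argument estimate'' $Q=\mathcal{O}(\omega(a_1)+\omega(a_2))$ with $(a_1,a_2)=(t_1,t_2)$ in (iii) and $a_1=a_2=h_1$ or $a_1=a_2=h_2$ in (iv) (whichever increment occurs in the line; for the last, double‑difference line both do and either works — forming the difference of the $\phi$'s cancels the constant terms $-4f(x,y)$, $-4f(x+z_1,y+z_2)$, leaving only first differences of $f$ of step $h_1$ or $h_2$). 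Given these, distinguish two cases. If $\max(a_1,a_2)\le\max(|z_1|,|z_2|)$, use the short‑argument estimate and that $v$ is non‑decreasing: for each $i$ pick $z_j$ with $a_i\le|z_j|$, so $\omega(a_i)=\tfrac{\omega(a_i)}{v(a_i)}\,v(a_i)\le\tfrac{\omega(a_i)}{v(a_i)}\,v(|z_j|)\le\tfrac{\omega(a_i)}{v(a_i)}\bigl(v(|z_1|)+v(|z_2|)\bigr)$. If instead $\max(|z_1|,|z_2|)\le\max(a_1,a_2)$, use the $z$‑estimate and that $\omega/v$ is non‑decreasing: for each $j$ pick $a_i$ with $|z_j|\le a_i$, so $\omega(|z_j|)=\tfrac{\omega(|z_j|)}{v(|z_j|)}\,v(|z_j|)\le\tfrac{\omega(a_i)}{v(a_i)}\,v(|z_j|)$. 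In either case, summing, $Q=\mathcal{O}\bigl((v(|z_1|)+v(|z_2|))\bigl(\tfrac{\omega(a_1)}{v(a_1)}+\tfrac{\omega(a_2)}{v(a_2)}\bigr)\bigr)$; this is precisely (iii), and it yields (iv) because $\tfrac{\omega(h_i)}{v(h_i)}\le\tfrac{\omega(h_1)}{v(h_1)}+\tfrac{\omega(h_2)}{v(h_2)}$.

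\emph{Main obstacle.} The only step needing genuine care is the bookkeeping in (iv): one has to expand the (possibly iterated) difference in $t_1,t_2$ of $H_{x,z_1,y,z_2}$, verify the cancellation of the constant terms, and arrange the remaining sum so that it visibly satisfies \emph{both} the $h$‑estimate and the $z$‑estimate. Once this is done the case analysis is mechanical, and the rest is just the triangle inequality, periodicity, and the defining properties of the moduli.
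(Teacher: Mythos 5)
Your proof is correct and follows essentially the same route as the paper: parts (i)--(ii) via the triangle inequality applied to the two regroupings of the $f$-differences (steps $(t_1,t_2)$ versus steps $(z_1,z_2)$), and parts (iii)--(iv) by playing the two a priori estimates against each other through the monotonicity of $v$ and of $\omega/v$. The only difference is organizational: where the paper splits into four cases by comparing $t_1$ with $|z_1|$ and $t_2$ with $|z_2|$ separately (with further sub-cases in the mixed situations), you compare $\max(a_1,a_2)$ with $\max(|z_1|,|z_2|)$ and need only two cases --- a cleaner bookkeeping that yields the same bounds.
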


\begin{proof}
Part (i). We calculate as 
\begin{equation*}
\begin{split}
\|\phi_{x,y}(t_1,t_2)\|_p&\leq \frac{1}{4}\left\{\|f(x+t_1,y+t_2)-f(x,y)%
\|_p+\|f(x,y)-f(x-t_1,y+t_2)\|_p \right. \\
&\quad + \left.
\|f(x,y)-f(x+t_1,y-t_2)\|_p+\|f(x,y)-f(x-t_1,y-t_2)\|_p\right\} \\
&=\mathcal{O}\left(\omega_1 (t_1)+\omega_2 (t_2)\right).
\end{split}%
\end{equation*}

Part (ii). Since $f\in \text{Lip}(\omega_1,\omega_2,p)$, we have 
\begin{equation*}
\begin{split}
\|H_{x,z_1,y,z_2}(t_1,t_2)\|_p&=
\|\phi_{x+z_1,y+z_2}(t_1,t_2)-\phi_{x,y}(t_1,t_2)\|_p \\
&\leq\frac{1}{4}\left\{\|f(x+z_1+t_1,y+z_2+t_2)-f(x+z_1,y+z_2)\|_p \right. \\
&\quad +\|f(x+z_1-t_1,y+z_2+t_2)-f(x+z_1,y+z_2)\|_p \\
&\quad +\|f(x+z_1+t_1,y+z_2-t_2)-f(x+z_1,y+z_2)\|_p \\
&\quad +\|f(x+z_1-t_1,y+z_2-t_2)-f(x+z_1,y+z_2)\|_p \\
&\quad +\|f(x+t_1,y+t_2)-f(x,y)\|_p \\
&\quad +\|f(x-t_1,y+t_2)-f(x,y)\|_p \\
&\quad +\|f(x+t_1,y-t_2)-f(x,y)\|_p \\
&\quad + \left. \|f(x-t_1,y-t_2)-f(x,y)\|_p \right\} \\
&=\mathcal{O}\left(\omega_1 (t_1)+\omega_2 (t_2)\right).
\end{split}%
\end{equation*}

For the second part, a similar reasoning yields 
\begin{equation}  \label{eq9}
\begin{split}
\|H_{x,z_1,y,z_2}(t_1,t_2)\|_p&=
\|\phi_{x+z_1,y+z_2}(t_1,t_2)-\phi_{x,y}(t_1,t_2)\|_p \\
&\leq\frac{1}{4}\left\{\|f(x+z_1+t_1,y+z_2+t_2)-f(x+t_1,y+t_2)\|_p \right. \\
&\quad +\|f(x+z_1-t_1,y+z_1+t_2)-f(x-t_1,y+t_2)\|_p \\
&\quad +\|f(x+z_1+t_1,y+z_2-t_2)-f(x+t_1,y-t_2)\|_p \\
&\quad + \left. \|f(x+z_1-t_1,y+z_2-t_2)-f(x-t_1,y-t_2)\|_p\right\} \\
&\quad +\|f(x+z_1,y+z_2)-f(x,y)\|_p \\
& =\mathcal{O}\left(\omega_1 (|z_1|)+\omega_2 (|z_2|)\right).
\end{split}%
\end{equation}
The other relations can be verified in a very same way. We omit their
proofs.

Part (iii). Using part (ii) and the fact $v_1(t_1)$ is non-decreasing, in
case of $t_1\leq |z_1|$ and $t_2\leq |z_2|$, we get 
\begin{equation*}
\begin{split}
\|H_{x,z_1,y,z_2}(t_1,t_2)\|_p&=\mathcal{O}\left(\omega_1 (t_1)+\omega_1
(t_2)\right) \\
&=\mathcal{O}\left( v_1 (t_1) \frac{\omega_1 (t_1)}{v_1 (t_1)} + v_1 (t_2)%
\frac{\omega_1 (t_2)}{v_1 (t_2)}\right) \\
&=\mathcal{O}\left( v_1 (|z_1|) \frac{\omega_1 (t_1)}{v_1 (t_1)} + v_1
(|z_2|)\frac{\omega_1 (t_2)}{v_1 (t_2)}\right).
\end{split}%
\end{equation*}
Since $\frac{\omega_1 (t_1)}{v_1(t_1)}$ is non-decreasing, and we have the
second part of (ii), then for $t_1\geq |z_1|$ and $t_2\geq |z_2|$, we also
obtain 
\begin{equation*}
\begin{split}
\|H_{x,z_1,y,z_2}(t_1,t_2)\|_p&=\mathcal{O}\left(\omega_1 (|z_1|)+\omega_1
(|z_2|)\right) \\
&=\mathcal{O}\left(v_1 (|z_1|) \frac{\omega_1 (|z_1|)}{v_1 (|z_1|)} + v_1
(|z_2|)\frac{\omega_1 (|z_2|)}{v_1 (|z_2|)}\right) \\
&=\mathcal{O}\left(v_1 (|z_1|) \frac{\omega_1 (t_1)}{v_1 (t_1)} + v_1 (|z_2|)%
\frac{\omega_1 (t_2)}{v_1 (t_2)}\right).
\end{split}%
\end{equation*}
For $t_1\leq |z_1|$ and $t_2\geq |z_2|$, consider two possibilities. If $%
|z_1| \geq t_2$, then 
\begin{equation*}
\begin{split}
\|H_{x,z_1,y,z_2}(t_1,t_2)\|_p&=\mathcal{O}\left(\omega_1 (t_1)+\omega_1
(t_2)\right) \\
&=\mathcal{O}\left( v_1 (t_1) \frac{\omega_1 (t_1)}{v_1 (t_1)} + v_1 (t_2) 
\frac{\omega_1 (t_2)}{v_1 (t_2)} \right) \\
& = \mathcal{O}\left( v_1 (|z_1|) \left( \frac{\omega_1 (t_1)}{v_1 (t_1)} + 
\frac{\omega_1 (t_2)}{v_1 (t_2)}\right) \right).
\end{split}%
\end{equation*}
If $t_2 \geq |z_1|$, then 
\begin{equation*}
\begin{split}
\|H_{x,z_1,y,z_2}(t_1,t_2)\|_p&=\mathcal{O}\left(\omega_1 (|z_1|)+\omega_1
(|z_2|)\right) \\
&=\mathcal{O}\left(v_1 (|z_1|) \frac{\omega_1 (|z_1|)}{v_1 (|z_1|)} + v_1
(|z_2|)\frac{\omega_1 (|z_2|)}{v_1 (|z_2|)}\right) \\
&=\mathcal{O}\left( (v_1 (|z_1|) + v_1 (|z_2|)) \frac{\omega_1 (t_2)}{v_1
(t_2)}\right).
\end{split}%
\end{equation*}
For $t_1\geq |z_1|$ and $t_2\leq |z_2|$, we can get 
\begin{equation*}
\|H_{x,z_1,y,z_2}(t_1,t_2)\|_p = \mathcal{O}\left( v_1 (|z_2|) \left( \frac{%
\omega_1 (t_1)}{v_1 (t_1)} + \frac{\omega_1 (t_2)}{v_1 (t_2)}\right) \right)
\end{equation*}
in case of $|z_2| \geq t_1$, and 
\begin{equation*}
\|H_{x,z_1,y,z_2}(t_1,t_2)\|_p=\mathcal{O}\left( (v_1 (|z_1|) + v_1 (|z_2|)) 
\frac{\omega_1 (t_1)}{v_1 (t_1)}\right)
\end{equation*}
in case of $t_1 \geq |z_2|$ similarly as before. This shows the validity of
the first inequality in part (iii). The other relations can be verified in a
very same way.

Part (iv). We have
\begin{equation*}
\begin{split}
&\|H_{x,z_1,y,z_2}(t_1,t_2)-H_{x,z_1,y,z_2}(t_1+h_1,t_2)\|_p \\
&\leq\|\phi_{x,y}(t_1+h_1,t_2)-\phi_{x,y}(t_1,t_2)\|_p \\
&\quad +\|\phi_{x+z_1,y+z_2}(t_1+h_1,t_2)-\phi_{x+z_1,y+z_2}(t_1,t_2)\|_p \\
&=\mathcal{O}\left(\omega_1 (h_1)\right) = \mathcal{O}\left(\omega_1 (h_1) +
\omega_1 (h_2)\right)
\end{split}%
\end{equation*}
and 
\begin{equation*}
\begin{split}
&\|H_{x,z_1,y,z_2}(t_1,t_2)-H_{x,z_1,y,z_2}(t_1+h_1,t_2)\|_p \\
&\leq\|H_{x,z_1,y,z_2}(t_1,t_2)\|_p + \|H_{x,z_1,y,z_2}(t_1+h_1,t_2)\|_p \\
&=\mathcal{O}\left(\omega_1 (|z_1|)+\omega_2 (|z_2|)\right),
\end{split}%
\end{equation*}
while 
\begin{equation*}
\begin{split}
&\|H_{x,z_1,y,z_2}(t_1,t_2)-H_{x,z_1,y,z_2}(t_1,t_2+h_2)\|_p \\
&\leq\|\phi_{x,y}(t_1,t_2+h_2)-\phi_{x,y}(t_1,t_2)\|_p \\
&\quad +\|\phi_{x+z_1,y+z_2}(t_1,t_2+h_2)-\phi_{x+z_1,y+z_2}(t_1,t_2)\|_p \\
&=\mathcal{O}\left(\omega_1 (h_2)\right) = \mathcal{O}\left(\omega_1 (h_1) +
\omega_1 (h_2)\right)
\end{split}%
\end{equation*}
and 
\begin{equation*}
\begin{split}
&\|H_{x,z_1,y,z_2}(t_1,t_2)-H_{x,z_1,y,z_2}(t_1,t_2+h_2)\|_p \\
&\leq\|H_{x,z_1,y,z_2}(t_1,t_2)\|_p + \|H_{x,z_1,y,z_2}(t_1,t_2+h_2)\|_p \\
&=\mathcal{O}\left(\omega_1 (|z_1|)+\omega_2 (|z_2|)\right),
\end{split}%
\end{equation*}
furthermore 
\begin{equation*}
\begin{split}
\| H_{x,z_1,y,z_2}(t_1,t_2)- &H_{x,z_1,y,z_2}(t_1+h_1,t_2)-
H_{x,z_1,y,z_2}(t_1,t_2+h_2)\\
&\quad + H_{x,z_1,y,z_2}(t_1+h_1,t_2+h_2) \|_p \\
&\leq \|H_{x,z_1,y,z_2}(t_1,t_2)-H_{x,z_1,y,z_2}(t_1+h_1,t_2)\|_p \\
&\quad + \| H_{x,z_1,y,z_2}(t_1,t_2+h_2) -
H_{x,z_1,y,z_2}(t_1+h_1,t_2+h_2)\|_p \\
&= \mathcal{O}\left(\omega_1 (h_1) + \omega_1 (h_2)\right)
\end{split}%
\end{equation*}
and 
\begin{equation*}
\begin{split}
\|H_{x,z_1,y,z_2}(t_1,t_2)&-H_{x,z_1,y,z_2}(t_1+h_1,t_2) -
H_{x,z_1,y,z_2}(t_1,t_2+h_2)\\
&\quad + H_{x,z_1,y,z_2}(t_1+h_1,t_2+h_2)\|_p \\
&\leq\|H_{x,z_1,y,z_2}(t_1,t_2)\|_p + \|H_{x,z_1,y,z_2}(t_1+h_1,t_2)\|_p \\
&\quad + \| H_{x,z_1,y,z_2}(t_1,t_2+h_2)\|_p + \|
H_{x,z_1,y,z_2}(t_1+h_1,t_2+h_2)\|_p \\
&=\mathcal{O}\left(\omega_1 (|z_1|)+\omega_2 (|z_2|)\right).
\end{split}%
\end{equation*}
From analogous estimations, we can obtain part (iv) by considering the four
cases $h_1\leq |z_1|$ and $h_2\leq |z_2|$; $h_1\geq |z_1|$ and $h_2\geq
|z_2| $; $h_1\leq |z_1|$ and $h_2\geq |z_2|$; $h_1\geq |z_1|$ and $h_2\leq
|z_2|$, respectively, as in part (iii). We omit the details.
\end{proof}

\section{Main Results}

We prove the following statement. 

\begin{theorem}
\label{the01} Let $\omega $ and $v$ be moduli of continuity so that $\frac{%
\omega (t)}{v(t)}$ is non-decreasing in $t$. If $f\in H_{p}^{(\omega ,\omega
)}$, $p\geq 1$, then 
\begin{equation*}
\Vert \sigma _{m,2m;n,2n}(f)-f\Vert _{p}^{(v,v)}= \mathcal{O} \left( \frac{\omega (h_{1})%
}{v(h_{1})}+\frac{\omega (h_{2})}{v(h_{2})}\right) ,
\end{equation*}%
where $h_{1}=\frac{\pi }{m}$, $h_{2}=\frac{\pi }{n}$ for $m,n\in 
\mathbb{N}
$.
\end{theorem}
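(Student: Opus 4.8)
The plan is to estimate separately the two parts of $\Vert D_{m,n}\Vert_p^{(v,v)}=\Vert D_{m,n}\Vert_p+A(D_{m,n};v,v)$, where $D_{m,n}(x,y)=\sigma_{m,2m;n,2n}(f;x,y)-f(x,y)$, always working from an integral representation. Since each Fej\'er mean reproduces constants, \eqref{eq3} gives $\sigma_{m,2m;n,2n}(1)=\tfrac14(9-3-3+1)=1$, so, by \eqref{eq6} and the representation for $\sigma_{m,2m;n,2n}(f)$ displayed thereafter, writing $h_{x,y}(t_1,t_2)-4f(x,y)=4\phi_{x,y}(t_1,t_2)$,
\[
D_{m,n}(x,y)=\frac{4}{\pi^{2}}\int_{0}^{\pi}\!\!\int_{0}^{\pi}\phi_{x,y}(t_1,t_2)\,\rho_m(t_1)\rho_n(t_2)\,\epsilon_m(t_1)\epsilon_n(t_2)\,dt_1\,dt_2,
\]
where $\epsilon_m(t):=\sin 2mt\,\sin mt$ and $\rho_m(t):=1/(4m\sin^{2}(t/2))$, and, on replacing $(x,y)$ by $(x+z_1,y+z_2)$, $D_{m,n}(x+z_1,y+z_2)-D_{m,n}(x,y)$ is the same integral with $H_{x,z_1,y,z_2}$ in place of $\phi_{x,y}$. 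I will use $\rho_m(t)\le\pi^{2}/(4mt^{2})$, $|\epsilon_m|\le 1$, $\int_0^\pi\rho_m|\epsilon_m|=\mathcal{O}(1)$, the elementary $\rho_m(t)-\rho_m(t+h_1)=\mathcal{O}(h_1/(mt^{3}))$ (and its $\rho_n$-analogue), and — the decisive fact — the half-period identity: since $2mh_1=2\pi$ and $mh_1=\pi$, one has $\epsilon_m(t+h_1)=-\epsilon_m(t)$, likewise $\epsilon_n(t+h_2)=-\epsilon_n(t)$.

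For $\Vert D_{m,n}\Vert_p$ I cut $(0,\pi)^{2}$ into four rectangles according to whether $t_1\le h_1$ or $t_1\ge h_1$, and $t_2\le h_2$ or $t_2\ge h_2$. On $(0,h_1)\times(0,h_2)$, the generalized Minkowski inequality (Lemma~\ref{le1}), Lemma~\ref{le2}(i) (so $\Vert\phi_{x,y}(t_1,t_2)\Vert_p=\mathcal{O}(\omega(h_1)+\omega(h_2))$ there) and $\int_0^{h_1}\rho_m|\epsilon_m|\int_0^{h_2}\rho_n|\epsilon_n|=\mathcal{O}(1)$ give $\mathcal{O}(\omega(h_1)+\omega(h_2))$. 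On the rectangle $t_1\le h_1$, $t_2\ge h_2$, I integrate $t_1$ out by Lemma~\ref{le1} (harmless, $\int_0^{h_1}\rho_m|\epsilon_m|=\mathcal{O}(1)$) and then, on the remaining $t_2$-integral over $(h_2,\pi)$, I do \emph{not} apply Minkowski directly — that would produce $\int_{h_2}^{\pi}\omega(t)t^{-2}\,dt$, hence a logarithm — but pair $t_2$ with $t_2+h_2$ \emph{before} taking norms: the substitution $t_2\mapsto t_2+h_2$ combined with $\epsilon_n(t_2+h_2)=-\epsilon_n(t_2)$ turns twice the integral into one whose bulk integrand is $\epsilon_n(t_2)$ times a term carrying either $\rho_n(t_2)-\rho_n(t_2+h_2)=\mathcal{O}(h_2/(nt_2^{3}))$ against $\mathcal{O}(\omega(h_1)+\omega(t_2))$, or a $t_2$-difference of $\phi_{x,y}$ of size $\mathcal{O}(\omega(h_2))$ (immediate from $f\in H_p^{(\omega,\omega)}$) against $\rho_n(t_2+h_2)=\mathcal{O}(1/(nt_2^{2}))$; using $\omega(t)\le\tfrac{2t}{h_2}\omega(h_2)$ for $t\ge h_2$, both $\int_{h_2}^{\pi}$ converge to $\mathcal{O}(\omega(h_1)+\omega(h_2))$, the two uncancelled end strips being handled trivially and contributing $\mathcal{O}(\omega(h_2))$ and $\mathcal{O}(n^{-2})$. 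The rectangle $t_1\ge h_1$, $t_2\le h_2$ is symmetric. On $(h_1,\pi)\times(h_2,\pi)$ the pairing must be done in \emph{both} variables at once: a Leibniz-type expansion of the resulting mixed second difference of $(t_1,t_2)\mapsto\phi_{x,y}(t_1,t_2)\rho_m(t_1)\rho_n(t_2)$ yields a leading term $[\text{mixed 2nd difference of }\phi_{x,y}]\cdot\rho_m(t_1)\rho_n(t_2)$, of size $\mathcal{O}(\omega(h_1)+\omega(h_2))\cdot\mathcal{O}(1/(mn\,t_1^{2}t_2^{2}))$, plus terms carrying at least one small kernel difference $\rho_m(t_1)-\rho_m(t_1+h_1)$ or $\rho_n(t_2)-\rho_n(t_2+h_2)$; all of them integrate over $(h_1,\pi)\times(h_2,\pi)$ to $\mathcal{O}(\omega(h_1)+\omega(h_2))$, using $mh_1=nh_2=\pi$ and $\omega(t)\le\tfrac{2t}{h_i}\omega(h_i)$ again. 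Summing the four rectangles, $\Vert D_{m,n}\Vert_p=\mathcal{O}(\omega(h_1)+\omega(h_2))$; and since $\omega(h_i)=v(h_i)\tfrac{\omega(h_i)}{v(h_i)}\le v(\pi)\tfrac{\omega(h_i)}{v(h_i)}$, this is $\mathcal{O}\bigl(\tfrac{\omega(h_1)}{v(h_1)}+\tfrac{\omega(h_2)}{v(h_2)}\bigr)$.

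For $A(D_{m,n};v,v)$ I repeat the identical region split and pairing on $\frac{4}{\pi^{2}}\int\!\!\int H_{x,z_1,y,z_2}\rho_m\rho_n\epsilon_m\epsilon_n$, now supplying the $v$-weighted estimates of Lemma~\ref{le2}: wherever a variable stays $\le h_i$, and for every coefficient carrying $\rho_m(t)-\rho_m(t+h_1)$ or $\rho_n(t)-\rho_n(t+h_2)$, I invoke Lemma~\ref{le2}(iii) together with $\tfrac{\omega(t)}{v(t)}\le\tfrac{\omega(h_i)}{v(h_i)}$ for $t\le h_i$ and $\tfrac{\omega(t)}{v(t)}\le\tfrac{2t}{h_i}\tfrac{\omega(h_i)}{v(h_i)}$ for $t\ge h_i$ (which keeps the $t^{-2}$ and $t^{-3}$ integrals convergent), while for the one- and two-variable pairing differences of $H$ I invoke the matching items of Lemma~\ref{le2}(iv). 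Each of the four rectangles then contributes $\mathcal{O}\bigl((v(|z_1|)+v(|z_2|))(\tfrac{\omega(h_1)}{v(h_1)}+\tfrac{\omega(h_2)}{v(h_2)})\bigr)$, \emph{uniformly} in $z_1,z_2\neq 0$; dividing by $v(|z_1|)+v(|z_2|)$ and taking the supremum gives $A(D_{m,n};v,v)=\mathcal{O}\bigl(\tfrac{\omega(h_1)}{v(h_1)}+\tfrac{\omega(h_2)}{v(h_2)}\bigr)$, and adding the two estimates proves the theorem.

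The hard part is precisely the removal of the logarithm: one must use the sign flip $\epsilon_m(t+h_1)=-\epsilon_m(t)$ (so that it is the \emph{kernel increments} $\rho_m(t)-\rho_m(t+h_1)=\mathcal{O}(h_1/(mt^3))$ and the \emph{smoothness increments} $\Vert\phi_{x,y}(\cdot,t_2+h_2)-\phi_{x,y}(\cdot,t_2)\Vert_p=\mathcal{O}(\omega(h_2))$ that drive the estimate), not the crude $\rho_m(t)|\epsilon_m(t)|\le\pi^2/(4mt^2)$; and then organize the bookkeeping — above all the genuinely two-dimensional pairing on $(h_1,\pi)\times(h_2,\pi)$ and the check that every uncancelled end term is $\mathcal{O}(m^{-2}+n^{-2})$, which is harmless because property (iii) forces $1/m=\mathcal{O}(\omega(h_1))$ and $1/n=\mathcal{O}(\omega(h_2))$. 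Everything else is a routine, though lengthy, use of Lemmas~\ref{le1} and~\ref{le2}.
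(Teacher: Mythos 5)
Your proposal is correct and follows essentially the same route as the paper: the same integral representation of $D_{m,n}$ via the normalization of the kernel, the same four-rectangle split at $t_i=h_i$, the same decisive sign-flip $S(t_1+h_1,t_2)=-S(t_1,t_2)$ exploited through the substitution/averaging trick (done in both variables on $(h_1,\pi)\times(h_2,\pi)$), and the same use of Lemmas~\ref{le1} and~\ref{le2}. The only cosmetic difference is that you apply the kernel increment $\rho_m(t)-\rho_m(t+h_1)=\mathcal{O}(h_1/(mt^3))$ directly to $1/(4m\sin^2(t/2))$, whereas the paper first peels off the bounded correction $\frac{1}{(2\sin(t/2))^2}-\frac{1}{t^2}$ and substitutes only in the $1/t^2$ part; you also spell out the $\Vert D_{m,n}\Vert_p$ estimate that the paper leaves as "similar lines."
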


\allowdisplaybreaks{\begin{proof}
Using the equality 
\begin{equation}
\int_{0}^{\pi }\int_{0}^{\pi }\frac{\sin 2mt_{1}\sin mt_{1}\sin 2nt_{2}\sin
nt_{2}}{\left( 4\sin \frac{t_{1}}{2}\sin \frac{t_{2}}{2}\right) ^{2}}%
dt_{1}dt_{2}=\frac{mn\pi ^{2}}{4}  \label{eq11}
\end{equation}%
we have 
\begin{equation}
\begin{split}
D_{m,n}(x,y)& =\sigma _{m,2m;n,2n}(f;x,y)-f(x,y) \\
& =\frac{4}{mn\pi ^{2}}\int_{0}^{\pi }\int_{0}^{\pi }\phi _{x,y}(t_{1},t_{2})%
\frac{S(t_{1},t_{2})}{\left( 4\sin \frac{t_{1}}{2}\sin \frac{t_{2}}{2}%
\right) ^{2}}dt_{1}dt_{2},
\end{split}
\label{eq12}
\end{equation}%
where 
\begin{equation*}
\begin{split}
\phi _{x,y}(t_{1},t_{2})=& \frac{1}{4}\big[%
f(x+t_{1},y+t_{2})+f(x-t_{1},y+t_{2}) \\
& \quad +f(x+t_{1},y-t_{2})+f(x-t_{1},y-t_{2})-4f(x,y)\big]
\end{split}%
\end{equation*}%
and 
\begin{equation*}
S(t_{1},t_{2})=\sin 2mt_{1}\sin mt_{1}\sin 2nt_{2}\sin nt_{2}.
\end{equation*}
By definition, we have 
\begin{equation}
\Vert D_{m,n}\Vert _{p}^{(v,v)}:=\Vert D_{m,n}\Vert _{p}+\sup_{z_{1}\neq
0,\,\,z_{2}\neq 0}\frac{\Vert D_{m,n}(x+z_{1},y+z_{2})-D_{m,n}(x,y)\Vert _{p}%
}{v(|z_{1}|)+v(|z_{2}|)}.  \label{eq13}
\end{equation}%
Now, we can write 
\begin{equation}
\begin{split}
& D_{m,n}(x+z_{1},y+z_{2})-D_{m,n}(x,y) \\
& =\frac{4}{mn\pi ^{2}}\int_{0}^{\pi }\int_{0}^{\pi }\left[ \phi
_{x+z_{1},y+z_{2}}(t_{1},t_{2})-\phi _{x,y}(t_{1},t_{2})\right] \frac{%
S(t_{1},t_{2})}{\left( 4\sin \frac{t_{1}}{2}\sin \frac{t_{2}}{2}\right) ^{2}}%
dt_{1}dt_{2} \\
& =\frac{4}{mn\pi ^{2}}\int_{0}^{\pi }\int_{0}^{\pi
}H_{x,z_{1},y,z_{2}}(t_{1},t_{2})\frac{S(t_{1},t_{2})}{\left( 4\sin \frac{%
t_{1}}{2}\sin \frac{t_{2}}{2}\right) ^{2}}dt_{1}dt_{2} \\
& =\frac{4}{mn\pi ^{2}}\int_{0}^{h_{1}}%
\int_{0}^{h_{2}}H_{x,z_{1},y,z_{2}}(t_{1},t_{2})\frac{S(t_{1},t_{2})}{\left(
4\sin \frac{t_{1}}{2}\sin \frac{t_{2}}{2}\right) ^{2}}dt_{1}dt_{2} \\
& \quad +\frac{4}{mn\pi ^{2}}\int_{h_{1}}^{\pi
}\int_{0}^{h_{2}}H_{x,z_{1},y,z_{2}}(t_{1},t_{2})\frac{S(t_{1},t_{2})}{%
\left( 4\sin \frac{t_{1}}{2}\sin \frac{t_{2}}{2}\right) ^{2}}dt_{1}dt_{2} \\
& \quad +\frac{4}{mn\pi ^{2}}\int_{0}^{h_{1}}\int_{h_{2}}^{\pi
}H_{x,z_{1},y,z_{2}}(t_{1},t_{2})\frac{S(t_{1},t_{2})}{\left( 4\sin \frac{%
t_{1}}{2}\sin \frac{t_{2}}{2}\right) ^{2}}dt_{1}dt_{2} \\
& \quad +\frac{4}{mn\pi ^{2}}\int_{h_{1}}^{\pi }\int_{h_{2}}^{\pi
}H_{x,z_{1},y,z_{2}}(t_{1},t_{2})\frac{S(t_{1},t_{2})}{\left( 4\sin \frac{%
t_{1}}{2}\sin \frac{t_{2}}{2}\right) ^{2}}dt_{1}dt_{2}:=\sum_{r=1}^{4}J_{r},
\end{split}
\label{eq14}
\end{equation}

Using Lemma \ref{le1} for $p\geq 1$, by the estimate 
\begin{equation*}
\left\vert \frac{S(t_{1},t_{2})}{\left( 4\sin \frac{t_{1}}{2}\sin \frac{t_{2}%
}{2}\right) ^{2}}\right\vert =\mathcal{O}\left( m^{2}n^{2}\right) ,\quad
0<t_{1}\leq \pi ,0<t_{2}\leq \pi ,
\end{equation*}%
and Lemma \ref{le2}, we have 
\begin{equation}
\begin{split}
\Vert J_{1}\Vert _{p}& \leq \frac{4}{mn\pi ^{2}}\int_{0}^{h_{1}}%
\int_{0}^{h_{2}}\Vert H_{x,z_{1},y,z_{2}}(t_{1},t_{2})\Vert _{p}\left\vert 
\frac{S(t_{1},t_{2})}{\left( 4\sin \frac{t_{1}}{2}\sin \frac{t_{2}}{2}%
\right) ^{2}}\right\vert dt_{1}dt_{2} \\
& =\mathcal{O}\left( \left( v(|z_{1}|)+v(|z_{2}|)\right)
mn\int_{0}^{h_{1}}\int_{0}^{h_{2}}\left( \frac{\omega (t_{1})}{v(t_{1})}+%
\frac{\omega (t_{2})}{v(t_{2})}\right) dt_{1}dt_{2}\right) \\
& =\mathcal{O}\left( \left( v(|z_{1}|)+v(|z_{2}|)\right) mn\left( \frac{%
\omega (h_{1})}{v(h_{1})}+\frac{\omega (h_{2})}{v(h_{2})}\right)
\int_{0}^{h_{1}}\int_{0}^{h_{2}}dt_{1}dt_{2}\right) \\
& =\mathcal{O}\left( \left( v(|z_{1}|)+v(|z_{2}|)\right) \left( \frac{\omega
(h_{1})}{v(h_{1})}+\frac{\omega (h_{2})}{v(h_{2})}\right) \right) .
\end{split}
\label{eq15}
\end{equation}

The quantity $J_{2}$ can be written as follows: 
\begin{equation}
\begin{split}
J_{2}& =\frac{4}{mn\pi ^{2}}\int_{h_{1}}^{\pi
}\int_{0}^{h_{2}}H_{x,z_{1},y,z_{2}}(t_{1},t_{2})\sin 2mt_{1}\sin mt_{1}\\%
&\quad \times \frac{\sin 2nt_{2}\sin nt_{2}}{\left( 2\sin \frac{t_{2}}{2}\right) ^{2}}%
\left( \frac{1}{\left( 2\sin \frac{t_{1}}{2}\right) ^{2}}-\frac{1}{t_{1}^{2}}%
\right) dt_{1}dt_{2}\\
&\quad +\frac{4}{mn\pi ^{2}}\int_{h_{1}}^{\pi
}\int_{0}^{h_{2}}H_{x,z_{1},y,z_{2}}(t_{1},t_{2})\frac{S(t_{1},t_{2})}{%
t_{1}^{2}\left( 2\sin \frac{t_{2}}{2}\right) ^{2}}%
dt_{1}dt_{2}:=J_{21}+J_{22}.
\end{split}
\label{eq16}
\end{equation}%
Since the function 
\begin{equation*}
\frac{1}{\left( 2\sin \frac{t_{1}}{2}\right) ^{2}}-\frac{1}{t_{1}^{2}}
\end{equation*}%
is bounded for $0<t_{1}\leq \pi $ and 
\begin{equation*}
\left\vert \frac{\sin 2nt_{2}\sin nt_{2}}{\left( 2\sin \frac{t_{2}}{2}%
\right) ^{2}}\right\vert =\mathcal{O}\left( n^{2}\right) ,\quad 0<t_{2}\leq
\pi ,
\end{equation*}%
then by Lemma \ref{le1} with $p\geq 1$ and Lemma \ref{le2}, we get 
\begin{equation}
\begin{split}
\Vert J_{21}\Vert _{p}& =\mathcal{O}\left( \frac{n}{m}\right)
\int_{h_{1}}^{\pi }\int_{0}^{h_{2}}\Vert
H_{x,z_{1},y,z_{2}}(t_{1},t_{2})\Vert _{p}dt_{1}dt_{2} \\
& =\mathcal{O}\left( \left( v(|z_{1}|)+v(|z_{2}|)\right) \frac{n}{m}%
\int_{h_{1}}^{\pi }\int_{0}^{h_{2}}\left( \frac{\omega (t_{1})}{v(t_{1})}+%
\frac{\omega (t_{2})}{v(t_{2})}\right) dt_{1}dt_{2}\right) \\
& =\mathcal{O}\left( \left( v(|z_{1}|)+v(|z_{2}|)\right) \frac{n}{m}\left( 
\frac{\omega (\pi )}{v(\pi )}+\frac{\omega (h_{2})}{v(h_{2})}\right)
\int_{h_{1}}^{\pi }\int_{0}^{h_{2}}dt_{1}dt_{2}\right) \\
& =\mathcal{O}\left( \frac{1}{m}\left( v(|z_{1}|)+v(|z_{2}|)\right) \left( 
\frac{\omega (\pi )}{v(\pi )}+\frac{\omega (h_{2})}{v(h_{2})}\right) \right)
.
\end{split}
\label{eq17}
\end{equation}%
It is clear that for $m\in 
\mathbb{N}
$%
\begin{equation*}
\frac{1}{m}\omega \left( \pi \right) \leq 2\omega \left( \frac{\pi }{m}%
\right) ,
\end{equation*}%
whence 
\begin{equation*}
\left\Vert J_{21}\right\Vert _{p}=O\left( \left(
v(|z_{1}|)+v(|z_{2}|)\right) \left( \frac{\omega (h_{1})}{v(h_{1})}+\frac{%
\omega (h_{2})}{v(h_{2})}\right) \right) .
\end{equation*}%
Further, substituting $t_{1}+h_{1}$ in place of $t_{1}$ in 
\begin{equation}
J_{22}=\frac{4}{mn\pi ^{2}}\int_{h_{1}}^{\pi
}\int_{0}^{h_{2}}H_{x,z_{1},y,z_{2}}(t_{1},t_{2})\frac{S(t_{1},t_{2})}{%
t_{1}^{2}\left( 2\sin \frac{t_{2}}{2}\right) ^{2}}dt_{1}dt_{2}  \label{eq18}
\end{equation}%
we obtain 
\begin{equation}
J_{22}=-\frac{4}{mn\pi ^{2}}\int_{0}^{\pi
-h_{1}}\int_{0}^{h_{2}}H_{x,z_{1},y,z_{2}}(t_{1}+h_{1},t_{2})\frac{%
S(t_{1},t_{2})}{(t_{1}+h_{1})^{2}\left( 2\sin \frac{t_{2}}{2}\right) ^{2}}%
dt_{1}dt_{2},  \label{eq19}
\end{equation}%
Hence, from (\ref{eq18}) and (\ref{eq19}) we get 
\begin{equation*}
\begin{split}
J_{22}& =\frac{2}{mn\pi ^{2}}\int_{h_{1}}^{\pi
}\int_{0}^{h_{2}}H_{x,z_{1},y,z_{2}}(t_{1},t_{2})\frac{S(t_{1},t_{2})}{%
t_{1}^{2}\left( 2\sin \frac{t_{2}}{2}\right) ^{2}}dt_{1}dt_{2} \\
& \quad -\frac{2}{mn\pi ^{2}}\int_{0}^{\pi
-h_{1}}\int_{0}^{h_{2}}H_{x,z_{1},y,z_{2}}(t_{1}+h_{1},t_{2})\frac{%
S(t_{1},t_{2})}{(t_{1}+h_{1})^{2}\left( 2\sin \frac{t_{2}}{2}\right) ^{2}}%
dt_{1}dt_{2}\\
& =\frac{2}{mn\pi ^{2}}\int_{h_{1}}^{\pi
}\int_{0}^{h_{2}}H_{x,z_{1},y,z_{2}}(t_{1},t_{2})\frac{S(t_{1},t_{2})}{%
t_{1}^{2}\left( 2\sin \frac{t_{2}}{2}\right) ^{2}}dt_{1}dt_{2} \\
& \quad -\frac{2}{mn\pi ^{2}}\int_{0}^{h_{1}}%
\int_{0}^{h_{2}}H_{x,z_{1},y,z_{2}}(t_{1}+h_{1},t_{2})\frac{S(t_{1},t_{2})}{%
(t_{1}+h_{1})^{2}\left( 2\sin \frac{t_{2}}{2}\right) ^{2}}dt_{1}dt_{2} \\
& \quad -\frac{2}{mn\pi ^{2}}\int_{h_{1}}^{\pi
}\int_{0}^{h_{2}}H_{x,z_{1},y,z_{2}}(t_{1}+h_{1},t_{2})\frac{S(t_{1},t_{2})}{%
(t_{1}+h_{1})^{2}\left( 2\sin \frac{t_{2}}{2}\right) ^{2}}dt_{1}dt_{2} \\
& \quad +\frac{2}{mn\pi ^{2}}\int_{\pi -h_{1}}^{\pi
}\int_{0}^{h_{2}}H_{x,z_{1},y,z_{2}}(t_{1}+h_{1},t_{2})\frac{S(t_{1},t_{2})}{%
(t_{1}+h_{1})^{2}\left( 2\sin \frac{t_{2}}{2}\right) ^{2}}dt_{1}dt_{2} 
\end{split}
\end{equation*}

\begin{equation}
\begin{split}
& =\frac{2}{mn\pi ^{2}}\int_{h_{1}}^{\pi }\int_{0}^{h_{2}}\left( \frac{%
H_{x,z_{1},y,z_{2}}(t_{1},t_{2})}{t_{1}^{2}}-\frac{%
H_{x,z_{1},y,z_{2}}(t_{1}+h_{1},t_{2})}{(t_{1}+h_{1})^{2}}\right) \frac{S(t_{1},t_{2})}{\left( 2\sin \frac{t_{2}}{2}%
\right) ^{2}}dt_{1}dt_{2} \\
& \quad -\frac{2}{mn\pi ^{2}}\int_{0}^{h_{1}}%
\int_{0}^{h_{2}}H_{x,z_{1},y,z_{2}}(t_{1}+h_{1},t_{2})\frac{S(t_{1},t_{2})}{%
(t_{1}+h_{1})^{2}\left( 2\sin \frac{t_{2}}{2}\right) ^{2}}dt_{1}dt_{2} \\
& \quad +\frac{2}{mn\pi ^{2}}\int_{\pi -h_{1}}^{\pi
}\int_{0}^{h_{2}}H_{x,z_{1},y,z_{2}}(t_{1}+h_{1},t_{2})\frac{S(t_{1},t_{2})}{%
(t_{1}+h_{1})^{2}\left( 2\sin \frac{t_{2}}{2}\right) ^{2}}%
dt_{1}dt_{2}:=\sum_{s=1}^{3}J_{22}^{(s)}.
\end{split}
\label{eq20}
\end{equation}%
Using the inequalities $\frac{2}{\pi }\beta \leq \sin \beta $ for $\beta \in
(0,\pi /2)$, $\sin \beta \leq \beta $ for $\beta \in (0,\pi )$, Lemma \ref%
{le1} and Lemma \ref{le2}, we have 
\begin{equation}
\begin{split}
\Vert J_{22}^{(2)}\Vert _{p}& =\mathcal{O}\left( \frac{1}{mn}\right)
\int_{0}^{h_{1}}\int_{0}^{h_{2}}\Vert
H_{x,z_{1},y,z_{2}}(t_{1}+h_{1},t_{2})\Vert _{p}\frac{(mt_{1}nt_{2})^{2}}{%
(t_{1}+h_{1})^{2}\left( \frac{t_{2}}{\pi }\right) ^{2}}dt_{1}dt_{2} \\
& =\mathcal{O}\left( mn\right)
\int_{0}^{h_{1}}\int_{0}^{h_{2}}(v(|z_{1}|)+(v(|z_{2}|))\left( \frac{\omega
(t_{1}+h_{1})}{v(t_{1}+h_{1})}+\frac{\omega (h_{2})}{v(h_{2})}\right)
dt_{1}dt_{2} \\
& =\mathcal{O}\left( mn\right) (v(|z_{1}|)+v(|z_{2}|))\left( \frac{\omega
(2h_{1})}{v(2h_{1})}+\frac{\omega (h_{2})}{v(h_{2})}\right)
\int_{0}^{h_{1}}\int_{0}^{h_{2}}dt_{1}dt_{2} \\
& =\mathcal{O}\left( mn\right) (v(|z_{1}|)+v(|z_{2}|))\left( \frac{\omega
(h_{1})}{v(h_{1})}+\frac{\omega (h_{2})}{v(h_{2})}\right) h_{1}h_{2} \\
& =\mathcal{O}\left( (v(|z_{1}|)+v(|z_{2}|))\left( \frac{\omega (h_{1})}{%
v(h_{1})}+\frac{\omega (h_{2})}{v(h_{2})}\right) \right) .
\end{split}
\label{eq21}
\end{equation}%
Moreover, the inequalities $|\sin \beta |\leq 1$ for $\beta \in (\pi
-h_{1},\pi )$, $\sin \beta \leq \beta $ for $\beta \in (0,\pi )$, Lemma \ref%
{le1}, and Lemma \ref{le2} imply 
\begin{equation}
\begin{split}
\Vert J_{22}^{(3)}\Vert _{p}& =\mathcal{O}\left( \frac{1}{mn}\right)
\int_{\pi -h_{1}}^{\pi }\int_{0}^{h_{2}}\Vert
H_{x,z_{1},y,z_{2}}(t_{1}+h_{1},t_{2})\Vert _{p}\frac{(nt_{2})^{2}}{%
(t_{1}+h_{1})^{2}\left( \frac{t_{2}}{\pi }\right) ^{2}}dt_{1}dt_{2} \\
& =\mathcal{O}\left( \frac{n}{m}\right) \int_{\pi -h_{1}}^{\pi
}\!\int_{0}^{h_{2}}(v(|z_{1}|)+v(|z_{2}|))\left( \frac{\omega (t_{1}+h_{1})}{%
v(t_{1}+h_{1})}+\frac{\omega (t_{2})}{v(t_{2})}\right) \frac{1}{%
(t_{1}+h_{1})^{2}}dt_{1}dt_{2} \\
& =\mathcal{O}\left( \frac{n}{m}\right) (v(|z_{1}|)+v(|z_{2}|))\int_{\pi
}^{\pi +h_{1}}\int_{0}^{h_{2}}\left( \frac{\omega (\theta _{1})}{v(\theta
_{1})}+\frac{\omega (t_{2})}{v(t_{2})}\right) \frac{1}{\theta _{1}^{2}}%
d\theta _{1}dt_{2} \\
& =\mathcal{O}\left( \frac{n}{m}\right) (v(|z_{1}|)+v(|z_{2}|))\left( \frac{%
\omega (\pi +h_{1})}{v(\pi )}+\frac{\omega (h_{2})}{v(h_{2})}\right)
\int_{\pi }^{\pi +h_{1}}\int_{0}^{h_{2}}\frac{d\theta _{1}dt_{2}}{\theta
_{1}^{2}} \\
& =\mathcal{O}\left( \frac{n}{m}\right) (v(|z_{1}|)+v(|z_{2}|))\left( \frac{%
\omega (\pi )+\omega (h_{1})}{v(\pi )}+\frac{\omega (h_{2})}{v(h_{2})}%
\right) \frac{h_{1}h_{2}}{\pi (\pi +h_{1})} \\
& =\mathcal{O}\left( \frac{1}{m^{2}}\right) (v(|z_{1}|)+v(|z_{2}|))\left( 
\frac{\omega (\pi )}{v(\pi )}+\frac{\omega (h_{2})}{v(h_{2})}\right) \\
& =\mathcal{O}\left( \frac{1}{m}(v(|z_{1}|)+v(|z_{2}|))\left( \frac{\omega
(h_{1})}{v(h_{1})}+\frac{\omega (h_{2})}{v(h_{2})}\right) \right) .
\end{split}
\label{eq22}
\end{equation}%
For $J_{22}^{(1)}$ we can write 
\begin{equation}
\begin{split}
J_{22}^{(1)}& =\frac{2}{mn\pi ^{2}}\int_{h_{1}}^{\pi }\int_{0}^{h_{2}}\Bigg(%
\frac{H_{x,z_{1},y,z_{2}}(t_{1},t_{2})}{t_{1}^{2}}-\frac{%
H_{x,z_{1},y,z_{2}}(t_{1}+h_{1},t_{2})}{t_{1}^{2}} \\
& \qquad +\frac{H_{x,z_{1},y,z_{2}}(t_{1}+h_{1},t_{2})}{t_{1}^{2}}-\frac{%
H_{x,z_{1},y,z_{2}}(t_{1}+h_{1},t_{2})}{(t_{1}+h_{1})^{2}}\Bigg) \\
& \qquad \times \frac{S(t_{1},t_{2})}{\left( 2\sin \frac{t_{2}}{2}\right)
^{2}}dt_{1}dt_{2} \\
& =\frac{2}{mn\pi ^{2}}\int_{h_{1}}^{\pi }\int_{0}^{h_{2}}\left(
H_{x,z_{1},y,z_{2}}(t_{1},t_{2})-H_{x,z_{1},y,z_{2}}(t_{1}+h_{1},t_{2})%
\right) \\
& \qquad \times \frac{S(t_{1},t_{2})}{t_{1}^{2}\left( 2\sin \frac{t_{2}}{2}%
\right) ^{2}}dt_{1}dt_{2} \\
& \quad +\frac{2}{mn\pi ^{2}}\int_{h_{1}}^{\pi
}\int_{0}^{h_{2}}H_{x,z_{1},y,z_{2}}(t_{1}+h_{1},t_{2})\frac{S(t_{1},t_{2})}{%
\left( 2\sin \frac{t_{2}}{2}\right) ^{2}} \\
& \qquad \times \left( \frac{1}{t_{1}^{2}}-\frac{1}{(t_{1}+h_{1})^{2}}%
\right) dt_{1}dt_{2}:=J_{221}^{(1)}+J_{222}^{(1)}.
\end{split}
\label{eq23}
\end{equation}%
Then, using Lemma \ref{le1}, and Lemma \ref{le2}, we have 
\begin{equation}
\begin{split}
\Vert J_{221}^{(1)}\Vert _{p}& =\mathcal{O}\left( \frac{1}{mn}\right)
\int_{h_{1}}^{\pi }\int_{0}^{h_{2}}\left\Vert
H_{x,z_{1},y,z_{2}}(t_{1},t_{2})-H_{x,z_{1},y,z_{2}}(t_{1}+h_{1},t_{2})%
\right\Vert _{p}\frac{(nt_{2})^{2}}{t_{1}^{2}\left( 2\frac{t_{2}}{\pi }%
\right) ^{2}}dt_{1}dt_{2} \\
& =\mathcal{O}\left( \frac{n}{m}\right) \int_{h_{1}}^{\pi
}\int_{0}^{h_{2}}(v(|z_{1}|)+v(|z_{2}|))\left( \frac{\omega (h_{1})}{v(h_{1})%
}+\frac{\omega (h_{2})}{v(h_{2})}\right) \frac{dt_{1}dt_{2}}{t_{1}^{2}} \\
& =\mathcal{O}\left( \frac{n}{m}\right) (v(|z_{1}|)+v(|z_{2}|))\left( \frac{%
\omega (h_{1})}{v(h_{1})}+\frac{\omega (h_{2})}{v(h_{2})}\right) \frac{h_{2}%
}{h_{1}} \\
& =\mathcal{O}\left( (v(|z_{1}|)+v(|z_{2}|))\left( \frac{\omega (h_{1})}{%
v(h_{1})}+\frac{\omega (h_{2})}{v(h_{2})}\right) \right) .
\end{split}
\label{eq24}
\end{equation}%
With similar reasoning we obtain 
\begin{equation}
\begin{split}
\Vert J_{222}^{(1)}\Vert _{p}& =\mathcal{O}\left( \frac{1}{mn}\right)
\int_{h_{1}}^{\pi }\int_{0}^{h_{2}}\left\Vert
H_{x,z_{1},y,z_{2}}(t_{1}+h_{1},t_{2})\right\Vert _{p}\frac{(nt_{2})^{2}}{%
\left( 2\frac{t_{2}}{\pi }\right) ^{2}}\frac{h_{1}(2t_{1}+h_{1})}{%
t_{1}^{2}(t_{1}+h_{1})^{2}}dt_{1}dt_{2} \\
& =\mathcal{O}\left( \frac{n}{m}\right) \int_{h_{1}}^{\pi
}\int_{0}^{h_{2}}(v(|z_{1}|)+(v(|z_{2}|))\left( \frac{\omega (t_{1}+h_{1})}{%
v(t_{1}+h_{1})}+\frac{\omega (t_{2})}{v(t_{2})}\right) \frac{%
h_{1}dt_{1}dt_{2}}{t_{1}^{2}(t_{1}+h_{1})} \\
& =\mathcal{O}\left( \frac{n}{m^{2}}\right)
(v(|z_{1}|)+v(|z_{2}|))\int_{h_{1}}^{\pi }\left( \frac{\omega (t_{1}+h_{1})}{%
(t_{1}+h_{1})v(h_{1})}+\frac{\omega (h_{2})}{v(h_{2})t_{1}}\right) \frac{%
h_{2}dt_{1}}{t_{1}^{2}} \\
& =\mathcal{O}\left( \frac{1}{m^{2}}\right) \left(
v(|z_{1}|)+v(|z_{2}|)\right) \left( \frac{\omega (2h_{1})}{2h_{1}v(h_{1})}%
\int\limits_{h_{1}}^{\pi }\frac{dt_{1}}{t_{1}^{2}}+\frac{\omega (h_{2})}{%
v(h_{2})}\int\limits_{h_{1}}^{\pi }\frac{dt_{1}}{t_{1}^{3}}\right) \\
& =\mathcal{O}\left( \frac{1}{m^{2}}\right) \left(
v(|z_{1}|)+v(|z_{2}|)\right) \left( m\frac{\omega (h_{1})}{2h_{1}v(t_{1})}%
+m^{2}\frac{\omega (h_{2})}{v(h_{2})}\right) \\
& =\mathcal{O}\left( \left( v(|z_{1}|)+v(|z_{2}|)\right) \left( \frac{\omega
(h_{1})}{v(h_{1})}+\frac{\omega (h_{2})}{v(h_{2})}\right) \right) .
\end{split}
\label{eq25}
\end{equation}%
So, from (\ref{eq23}), (\ref{eq24}) and (\ref{eq25}), we have 
\begin{equation}
\Vert J_{22}^{(1)}\Vert _{p}=\mathcal{O}\left( (v(|z_{1}|)+v(|z_{2}|))\left( 
\frac{\omega (h_{1})}{v(h_{1})}+\frac{\omega (h_{2})}{v(h_{2})}\right)
\right) .  \label{eq26}
\end{equation}%
Now, taking into account (\ref{eq20}), (\ref{eq21}), (\ref{eq22}) and (\ref%
{eq26}), we have 
\begin{equation}
\Vert J_{22}\Vert _{p}=\mathcal{O}\left( \left( v(|z_{1}|)+v(|z_{2}|)\right)
\left( \frac{\omega (h_{1})}{v(h_{1})}+\frac{\omega (h_{2})}{v(h_{2})}%
\right) \right) .  \label{eq27}
\end{equation}%
Whence, using (\ref{eq16}), (\ref{eq17}) and (\ref{eq27}), we obtain 
\begin{equation}
\Vert J_{2}\Vert _{p}=\mathcal{O}\left( \left( v(|z_{1}|)+v(|z_{2}|)\right)
\left( \frac{\omega (h_{1})}{v(h_{1})}+\frac{\omega (h_{2})}{v(h_{2})}%
\right) \right) .  \label{eq28}
\end{equation}

By analogy, we conclude that 
\begin{equation}
\Vert J_{3}\Vert _{p}=\mathcal{O}\left( \left( v(|z_{1}|)+v(|z_{2}|)\right)
\left( \frac{\omega (h_{1})}{v(h_{1})}+\frac{\omega (h_{2})}{v(h_{2})}%
\right) \right) .  \label{eq29}
\end{equation}

Finally, let us estimate the quantity $J_{4}$. Indeed, $J_{4}$ can be
rewritten as 
\begin{align*}
J_{4}& = \frac{4}{mn\pi ^{2}}\int_{h_{1}}^{\pi }\int_{h_{2}}^{\pi
}H_{x,z_{1},y,z_{2}}(t_{1},t_{2})S(t_1,t_2) \\
&\qquad \times \left( \frac{1}{\left( 2\sin \frac{t_{1}}{2}\right) ^{2}}-\frac{1}{%
t_{1}{}^{2}}\right) \left( \frac{1}{\left( 2\sin \frac{t_{2}}{2}\right) ^{2}}%
-\frac{1}{t_{2}{}^{2}}\right) dt_{1}dt_{2}\\
&\quad +\frac{4}{mn\pi ^{2}}\int_{h_{1}}^{\pi }\int_{h_{2}}^{\pi
}H_{x,z_{1},y,z_{2}}(t_{1},t_{2})S(t_1,t_2) \\
& \qquad \times \left( \frac{1}{\left( 2\sin \frac{t_{2}}{2}\right) ^{2}}-\frac{1}{%
t_{2}{}^{2}}\right) \frac{1}{t_{1}{}^{2}}dt_{1}dt_{2}\\
& \quad +\frac{4}{mn\pi ^{2}}\int_{h_{1}}^{\pi }\int_{h_{2}}^{\pi
}H_{x,z_{1},y,z_{2}}(t_{1},t_{2})S(t_1,t_2) \\
& \qquad \times \left( \frac{1}{\left( 2\sin \frac{t_{1}}{2}\right) ^{2}}-\frac{1}{%
t_{1}{}^{2}}\right) \frac{1}{t_{2}{}^{2}}dt_{1}dt_{2}\\
& \quad +\frac{4}{mn\pi ^{2}}\int_{h_{1}}^{\pi }\int_{h_{2}}^{\pi
}H_{x,z_{1},y,z_{2}}(t_{1},t_{2})\frac{S(t_1,t_2)}{\left( t_{1}t_{2}\right)
^{2}}dt_{1}dt_{2} \\
&:= J_{41}+J_{42}+J_{43}+J_{44}.
\end{align*}%
The boundedness of the function 
\begin{equation*}
\left( \frac{1}{\left( 2\sin \frac{t_{1}}{2}\right) ^{2}}-\frac{1}{%
t_{1}{}^{2}}\right) \left( \frac{1}{\left( 2\sin \frac{t_{2}}{2}\right) ^{2}}%
-\frac{1}{t_{2}{}^{2}}\right)
\end{equation*}%
for $0<t_{1},t_{2}\leq \pi $, Lemma \ref{le1} and Lemma \ref{le2}, implies
\begin{align*}
\Vert J_{41}\Vert _{p}&=\mathcal{O}\left( \frac{1}{mn}\right)
\int_{h_{1}}^{\pi }\int_{h_{2}}^{\pi }\Vert
H_{x,z_{1},y,z_{2}}(t_{1},t_{2})\Vert _{p}dt_{1}dt_{2}\\
&=\mathcal{O}\left( \frac{1}{mn}\right) \int_{h_{1}}^{\pi }\int_{h_{2}}^{\pi
}(v(|z_{1}|)+v(|z_{2}|))\left( \frac{\omega (t_{1})}{v(t_{1})}+\frac{\omega
(t_{2})}{v(t_{2})}\right) dt_{1}dt_{2}\\
&=\mathcal{O}\left( \frac{1}{mn}\right) (v(|z_{1}|)+v(|z_{2}|))\frac{\omega
\left( \pi \right) }{v\left( \pi \right) } \\
&=\mathcal{O}\left( \left( v(|z_{1}|)+v(|z_{2}|)\right) \left( \frac{\omega
(h_{1})}{v(h_{1})}+\frac{\omega (h_{2})}{v(h_{2})}\right) \right) .
\end{align*}%
Substituting $t_{1}$ with $t_{1}+h_{1}$ in
\begin{align*}
&&\frac{4}{mn\pi ^{2}}\int_{h_{1}}^{\pi }\int_{h_{2}}^{\pi
}H_{x,z_{1},y,z_{2}}(t_{1},t_{2})S(t_1,t_2) \left( \frac{1}{\left( 2\sin \frac{t_{2}}{2}\right) ^{2}}-\frac{1}{%
t_{2}{}^{2}}\right) \frac{1}{t_{1}{}^{2}}dt_{1}dt_{2}
\end{align*}%
we obtain
\begin{align*}
J_{42} &= -\frac{4}{mn\pi ^{2}}\int_{0}^{\pi -h_{1}}\int_{h_{2}}^{\pi
}H_{x,z_{1},y,z_{2}}(t_{1}+h_{1},t_{2})S(t_1,t_2) \\
& \qquad \times \left( \frac{1}{\left( 2\sin \frac{t_{2}}{2}\right) ^{2}}-\frac{1}{%
t_{2}{}^{2}}\right) \frac{1}{\left( t_{1}+h_{1}\right) {}^{2}}dt_{1}dt_{2}.
\end{align*}%
Hence
\begin{align*}
J_{42} &= \frac{2}{mn\pi ^{2}}\int_{h_{1}}^{\pi }\int_{h_{2}}^{\pi
}H_{x,z_{1},y,z_{2}}(t_{1},t_{2})S(t_{1},t_{2}) \left( \frac{1}{\left( 2\sin \frac{t_{2}}{2}\right) ^{2}}-\frac{1}{%
t_{2}{}^{2}}\right) \frac{1}{t_{1}^{2}}dt_{1}dt_{2}\\
& \quad -\frac{2}{mn\pi ^{2}}\int_{0}^{\pi -h_{1}}\int_{h_{2}}^{\pi
}H_{x,z_{1},y,z_{2}}(t_{1}+h_{1},t_{2})S(t_{1},t_{2}) \\
& \qquad \times \left( \frac{1}{\left( 2\sin \frac{t_{2}}{2}\right) ^{2}}-\frac{1}{%
t_{2}{}^{2}}\right) \frac{1}{\left( t_{1}+h_{1}\right) {}^{2}}dt_{1}dt_{2}\\
&= \frac{2}{mn\pi ^{2}}\int_{h_{1}}^{\pi }\int_{h_{2}}^{\pi
}H_{x,z_{1},y,z_{2}}(t_{1},t_{2})S(t_{1},t_{2}) \left( \frac{1}{\left( 2\sin \frac{t_{2}}{2}\right) ^{2}}-\frac{1}{%
t_{2}{}^{2}}\right) \frac{1}{t_{1}^{2}}dt_{1}dt_{2}\\
& \quad -\frac{2}{mn\pi ^{2}}\left( \int_{0}^{h_{1}}\int_{h_{2}}^{\pi
}+\int_{h_{1}}^{\pi }\int_{h_{2}}^{\pi }-\int_{\pi -h_{1}}^{\pi
}\int_{h_{2}}^{\pi }\right) H_{x,z_{1},y,z_{2}}(t_{1}+h_{1},t_{2}) \\
& \qquad \times S(t_{1},t_{2})\left( \frac{1}{\left( 2\sin \frac{t_{2}}{2}\right)
^{2}}-\frac{1}{t_{2}{}^{2}}\right) \frac{1}{\left( t_{1}+h_{1}\right) {}^{2}}%
dt_{1}dt_{2}\\
&= \frac{2}{mn\pi ^{2}}\int_{h_{1}}^{\pi }\int_{h_{2}}^{\pi }\left( \frac{%
H_{x,z_{1},y,z_{2}}(t_{1},t_{2})}{t_{1}{}^{2}}-\frac{%
H_{x,z_{1},y,z_{2}}(t_{1}+h_{1},t_{2})}{\left( t_{1}+h_{1}\right) {}^{2}}%
\right) \\
& \qquad \times S(t_{1},t_{2})\left( \frac{1}{\left( 2\sin \frac{t_{2}}{2}\right)
^{2}}-\frac{1}{t_{2}{}^{2}}\right) \frac{1}{t_{1}{}^{2}}dt_{1}dt_{2}\\
& \quad -\frac{2}{mn\pi ^{2}}\left( \int_{0}^{h_{1}}\int_{h_{2}}^{\pi }-\int_{\pi
-h_{1}}^{\pi }\int_{h_{2}}^{\pi }\right)
H_{x,z_{1},y,z_{2}}(t_{1}+h_{1},t_{2}) \\
& \qquad \times S(t_{1},t_{2})\left( \frac{1}{\left( 2\sin \frac{t_{2}}{2}\right)
^{2}}-\frac{1}{t_{2}{}^{2}}\right) \frac{1}{\left( t_{1}+h_{1}\right) {}^{2}}%
dt_{1}dt_{2} \\
&:= \sum\limits_{s=1}^{3}J_{42}^{\left( s\right) }.
\end{align*}%
Since the function 
\begin{equation*}
\frac{1}{\left( 2\sin \frac{t_{2}}{2}\right) ^{2}}-\frac{1}{t_{2}^{2}}
\end{equation*}%
is bounded for $0<t_{1}\leq \pi $, using Lemma \ref{le1} with $p\geq 1$ and
Lemma \ref{le2}, we have%
\begin{align*}
\left\Vert J_{42}^{\left( 2\right) }\right\Vert _{p} &= \mathcal{O}\left( 
\frac{1}{mn}\right) \int_{0}^{h_{1}}\int_{h_{2}}^{\pi }\left(
v(|z_{1}|)+v(|z_{2}|)\right) \left( \frac{\omega (t_{1}+h_{1})}{%
v(t_{1}+h_{1})}+\frac{\omega (t_{2})}{v(t_{2})}\right) \frac{%
m^{2}t_{1}^{2}dt_{1}dt_{2}}{(t_{1}+h_{1})^{2}} \\
&= \mathcal{O}\left( \frac{m}{n}\right) \left( v(|z_{1}|)+v(|z_{2}|)\right)
\left( \frac{\omega (2h_{1})}{v(2h_{1})}+\frac{\omega (\pi )}{v(\pi )}%
\right) h_{1} \\
&= \mathcal{O}\left( \left( v(|z_{1}|)+v(|z_{2}|)\right) \left( \frac{\omega
(h_{1})}{v(h_{1})}+\frac{\omega (h_{2})}{v(h_{2})}\right) \right) ,
\end{align*}%
\begin{align*}
\left\Vert J_{42}^{\left( 3\right) }\right\Vert _{p} &= \mathcal{O}\left( 
\frac{1}{mn}\right) \int_{\pi -h_{1}}^{\pi }\int_{h_{2}}^{\pi }\left(
v(|z_{1}|)+v(|z_{2}|)\right) \left( \frac{\omega (t_{1}+h_{1})}{%
v(t_{1}+h_{1})}+\frac{\omega (t_{2})}{v(t_{2})}\right) \frac{dt_{1}dt_{2}}{%
(t_{1}+h_{1})^{2}} \\
&= \mathcal{O}\left( \frac{1}{mn}\right) \left( v(|z_{1}|)+v(|z_{2}|)\right)
\left( \frac{\omega (\pi )}{v(\pi )}\right) h_{1} \\
&= \mathcal{O}\left( \frac{1}{m}\left( v(|z_{1}|)+v(|z_{2}|)\right) \left( 
\frac{\omega (h_{1})}{v(h_{1})}+\frac{\omega (h_{2})}{v(h_{2})}\right)
\right)
\end{align*}%
and%
\begin{align*}
\left\Vert J_{42}^{\left( 1\right) }\right\Vert _{p} &= \mathcal{O}\left( 
\frac{1}{mn}\right) \int_{h_{1}}^{\pi }\int_{h_{2}}^{\pi }\left\Vert
H_{x,z_{1},y,z_{2}}(t_{1},t_{2})-H_{x,z_{1},y,z_{2}}(t_{1}+h_{1},t_{2})%
\right\Vert _{p}\frac{1}{t_{1}^{2}}dt_{1}dt_{2} \\
& \quad +\mathcal{O}\left( \frac{1}{mn}\right) \int_{h_{1}}^{\pi
}\int_{h_{2}}^{\pi }\left\Vert
H_{x,z_{1},y,z_{2}}(t_{1}+h_{1},t_{2})\right\Vert _{p}\frac{%
h_{1}(2t_{1}+h_{1})}{t_{1}^{2}\left( t_{1}+h_{1}\right) ^{2}}dt_{1}dt_{2} \\
&= \mathcal{O}\left( \frac{1}{mn}\right) \left( v(|z_{1}|)+v(|z_{2}|)\right)
\left( \frac{\omega (h_{1})}{v(h_{1})}+\frac{\omega (h_{2})}{v(h_{2})}%
\right) \int_{h_{1}}^{\pi }\frac{1}{t_{1}^{2}}dt_{1} \\
& \quad +\mathcal{O}\left( \frac{1}{m^{2}n}\right) \int_{h_{1}}^{\pi
}\int_{h_{2}}^{\pi }\left( v(|z_{1}|)+v(|z_{2}|)\right) \\
& \qquad \times \left( \frac{\omega (t_{1}+h_{1})}{\left( t_{1}+h_{1}\right)
v(t_{1}+h_{1})}+\frac{\omega (t_{2})}{t_{1}v(t_{2})}\right) \frac{1}{%
t_{1}^{2}}dt_{1}dt_{2} \\
&= \mathcal{O}\left( \frac{1}{n}\right) \left( v(|z_{1}|)+v(|z_{2}|)\right)
\left( \frac{\omega (h_{1})}{v(h_{1})}+\frac{\omega (h_{2})}{v(h_{2})}\right)
\\
& \quad +\mathcal{O}\left( \frac{1}{m^{2}n}\right) \left(
v(|z_{1}|)+v(|z_{2}|)\right) \left( \frac{\omega (2h_{1})}{2h_{1}v(h_{1})}%
\int_{h_{1}}^{\pi }\frac{1}{t_{1}^{2}}dt_{1}+\frac{\omega (\pi )}{v(\pi )}%
\int_{h_{1}}^{\pi }\frac{1}{t_{1}^{3}}dt_{1}\right) \\
&= \mathcal{O}\left( \left( v(|z_{1}|)+v(|z_{2}|)\right) \left( \frac{\omega
(h_{1})}{v(h_{1})}+\frac{\omega (h_{2})}{v(h_{2})}\right) \right) .
\end{align*}%
The quantity $J_{43}$ can be estimated analogously to $J_{42}.$

Substituting in
\begin{equation*}
J_{44}=\frac{4}{mn\pi ^{2}}\int_{h_{1}}^{\pi }\int_{h_{2}}^{\pi
}H_{x,z_{1},y,z_{2}}(t_{1},t_{2})\frac{S(t_1,t_2)}{\left( t_{1}t_{2}\right)
^{2}}dt_{1}dt_{2}
\end{equation*}%
$t_{1}$ with $t_{1}+h_{1}$, $t_{2}$ with $t_{2}+h_{2}$ and $t_{1}$ with $%
t_{1}+h_{1}$, $t_{2}$ with $t_{2}+h_{2}$, respectively, we get
\begin{equation*}
J_{44}=-\frac{4}{mn\pi ^{2}}\int_{0}^{\pi -h_{1}}\int_{h_{2}}^{\pi
}H_{x,z_{1},y,z_{2}}(t_{1}+h_{1},t_{2})\frac{S(t_1,t_2)}{\left(
(t_{1}+h_{1})t_{2}\right) ^{2}}dt_{1}dt_{2},
\end{equation*}%
\begin{equation*}
J_{44}=-\frac{4}{mn\pi ^{2}}\int_{h_{1}}^{\pi }\int_{0}^{\pi
-h_{2}}H_{x,z_{1},y,z_{2}}(t_{1},t_{2}+h_{2})\frac{S(t_1,t_2)}{\left(
t_{1}(t_{2}+h_{2})\right) ^{2}}dt_{1}dt_{2},
\end{equation*}%
\begin{equation*}
J_{44}=\frac{4}{mn\pi ^{2}}\int_{0}^{\pi -h_{1}}\int_{0}^{\pi
-h_{2}}H_{x,z_{1},y,z_{2}}(t_{1}+h_{1},t_{2}+h_{2})\frac{S(t_1,t_2)}{\left(
(t_{1}+h_{1})(t_{2}+h_{2})\right) ^{2}}dt_{1}dt_{2}.
\end{equation*}

Hence%
\begin{align*}
J_{44}& =\frac{1}{mn\pi ^{2}}\int_{h_{1}}^{\pi }\int_{h_{2}}^{\pi
}H_{x,z_{1},y,z_{2}}(t_{1},t_{2})\frac{S(t_1,t_2)}{\left( t_{1}t_{2}\right)
^{2}}dt_{1}dt_{2}\\
& \quad -\frac{1}{mn\pi ^{2}}\int_{0}^{\pi -h_{1}}\int_{h_{2}}^{\pi
}H_{x,z_{1},y,z_{2}}(t_{1}+h_{1},t_{2})\frac{S(t_1,t_2)}{\left(
(t_{1}+h_{1})t_{2}\right) ^{2}}dt_{1}dt_{2}\\
& \quad -\frac{1}{mn\pi ^{2}}\int_{h_{1}}^{\pi }\int_{0}^{\pi
-h_{2}}H_{x,z_{1},y,z_{2}}(t_{1},t_{2}+h_{2})\frac{S(t_1,t_2)}{\left(
t_{1}(t_{2}+h_{2})\right) ^{2}}dt_{1}dt_{2}\\
& \quad +\frac{1}{mn\pi ^{2}}\int_{0}^{\pi -h_{1}}\int_{0}^{\pi
-h_{2}}H_{x,z_{1},y,z_{2}}(t_{1}+h_{1},t_{2}+h_{2})\frac{S(t_1,t_2)}{\left(
(t_{1}+h_{1})(t_{2}+h_{2})\right) ^{2}}dt_{1}dt_{2}\\
& =\frac{1}{mn\pi ^{2}}\int_{h_{1}}^{\pi }\int_{h_{2}}^{\pi
}H_{x,z_{1},y,z_{2}}(t_{1},t_{2})\frac{S(t_1,t_2)}{\left( t_{1}t_{2}\right)
^{2}}dt_{1}dt_{2}\\
& \quad -\frac{1}{mn\pi ^{2}}\left( \int_{0}^{h_{1}}\int_{h_{2}}^{\pi
}+\int_{h_{1}}^{\pi }\int_{h_{2}}^{\pi }-\int_{\pi -h_{1}}^{\pi
}\int_{h_{2}}^{\pi }\right) \\
& \quad \qquad H_{x,z_{1},y,z_{2}}(t_{1}+h_{1},t_{2})\frac{S(t_1,t_2)}{\left(
(t_{1}+h_{1})t_{2}\right) ^{2}}dt_{1}dt_{2} \\
& \quad -\frac{1}{mn\pi ^{2}}\left( \int_{h_{1}}^{\pi
}\int_{0}^{h_{2}}+\int_{h_{1}}^{\pi }\int_{h_{2}}^{\pi }-\int_{h_{1}}^{\pi
}\int_{\pi -h_{2}}^{\pi }\right) \\
& \quad \qquad H_{x,z_{1},y,z_{2}}(t_{1},t_{2}+h_{2})\frac{S(t_1,t_2)}{\left(
t_{1}(t_{2}+h_{2})\right) ^{2}}dt_{1}dt_{2}\\
& \quad +\frac{1}{mn\pi ^{2}}\left(
\int_{0}^{h_{1}}\int_{0}^{h_{2}}+\int_{0}^{h_{1}}\int_{h_{2}}^{\pi
}-\int_{0}^{h_{1}}\int_{\pi -h_{2}}^{\pi }+\int_{h_{1}}^{\pi
}\int_{0}^{h_{2}}+\int_{h_{1}}^{\pi }\int_{h_{2}}^{\pi }\right.\\
& \quad \qquad \qquad \left. -\int_{h_{1}}^{\pi }\int_{\pi -h_{2}}^{\pi }-\int_{\pi -h_{1}}^{\pi
}\int_{0}^{h_{2}}-\int_{\pi -h_{1}}^{\pi }\int_{h_{2}}^{\pi }+\int_{\pi
-h_{1}}^{\pi }\int_{\pi -h_{2}}^{\pi }\right)\\
& \quad \qquad \qquad \quad H_{x,z_{1},y,z_{2}}(t_{1}+h_{1},t_{2}+h_{2})\frac{S(t_1,t_2)}{\left(
(t_{1}+h_{1})(t_{2}+h_{2})\right) ^{2}}dt_{1}dt_{2}\\
& =\frac{1}{mn\pi ^{2}}\int_{h_{1}}^{\pi }\int_{h_{2}}^{\pi }\left( \frac{%
H_{x,z_{1},y,z_{2}}(t_{1},t_{2})}{\left( t_{1}t_{2}\right) ^{2}}-\frac{%
H_{x,z_{1},y,z_{2}}(t_{1}+h_{1},t_{2})}{\left( (t_{1}+h_{1})t_{2}\right) ^{2}%
}\right.\\
& \quad \qquad \quad  \left. -\frac{H_{x,z_{1},y,z_{2}}(t_{1},t_{2}+h_{2})}{\left(
t_{1}(t_{2}+h_{2})\right) ^{2}}+\frac{%
H_{x,z_{1},y,z_{2}}(t_{1}+h_{1},t_{2}+h_{2})}{\left(
(t_{1}+h_{1})(t_{2}+h_{2})\right) ^{2}}\right) S(t_1,t_2) dt_{1}dt_{2}\\
& \quad -\frac{1}{mn\pi ^{2}}\left( \int_{0}^{h_{1}}\int_{h_{2}}^{\pi }-\int_{\pi
-h_{1}}^{\pi }\int_{h_{2}}^{\pi }\right)
H_{x,z_{1},y,z_{2}}(t_{1}+h_{1},t_{2})\frac{S(t_1,t_2)}{\left(
(t_{1}+h_{1})t_{2}\right) ^{2}}dt_{1}dt_{2}\\
& \quad -\frac{1}{mn\pi ^{2}}\left( \int_{h_{1}}^{\pi
}\int_{0}^{h_{2}}-\int_{h_{1}}^{\pi }\int_{\pi -h_{2}}^{\pi }\right)
H_{x,z_{1},y,z_{2}}(t_{1},t_{2}+h_{2})\frac{S(t_1,t_2)}{\left(
t_{1}(t_{2}+h_{2})\right) ^{2}}dt_{1}dt_{2}\\
& \quad +\frac{1}{mn\pi ^{2}}\left(
\int_{0}^{h_{1}}\int_{0}^{h_{2}}+\int_{0}^{h_{1}}\int_{h_{2}}^{\pi
}-\int_{0}^{h_{1}}\int_{\pi -h_{2}}^{\pi }+\int_{h_{1}}^{\pi
}\int_{0}^{h_{2}}\right.\\
& \quad \qquad \qquad \left. -\int_{h_{1}}^{\pi }\int_{\pi -h_{2}}^{\pi }-\int_{\pi -h_{1}}^{\pi
}\int_{0}^{h_{2}}-\int_{\pi -h_{1}}^{\pi }\int_{h_{2}}^{\pi }+\int_{\pi
-h_{1}}^{\pi }\int_{\pi -h_{2}}^{\pi }\right)\\
& \quad \qquad \qquad \quad H_{x,z_{1},y,z_{2}}(t_{1}+h_{1},t_{2}+h_{2})\frac{S(t_1,t_2)}{\left(
(t_{1}+h_{1})(t_{2}+h_{2})\right) ^{2}}dt_{1}dt_{2} \\
& :=\sum \limits_{l=1}^{13}J_{44}^{\left( l\right) }.
\end{align*}

We start with the estimate of $J_{44}^{\left( 1\right) }$.
Indeed, we have
\begin{align*}
J_{44}^{\left( 1\right) } &= \frac{1}{mn\pi ^{2}}\int_{h_{1}}^{\pi
}\int_{h_{2}}^{\pi }\Bigg(\frac{H_{x,z_{1},y,z_{2}}(t_{1},t_{2})}{\left(
t_{1}t_{2}\right) ^{2}}-\frac{H_{x,z_{1},y,z_{2}}(t_{1}+h_{1},t_{2})}{\left(
(t_{1}+h_{1})t_{2}\right) ^{2}} \\
& \quad \qquad -\frac{H_{x,z_{1},y,z_{2}}(t_{1},t_{2}+h_{2})}{\left(
t_{1}(t_{2}+h_{2})\right) ^{2}}+\frac{%
H_{x,z_{1},y,z_{2}}(t_{1}+h_{1},t_{2}+h_{2})}{\left(
(t_{1}+h_{1})(t_{2}+h_{2})\right) ^{2}}\Bigg)S(t_{1},t_{2})dt_{1}dt_{2} \\
&= \frac{1}{mn\pi ^{2}}\int_{h_{1}}^{\pi }\int_{h_{2}}^{\pi }\Bigg(%
H_{x,z_{1},y,z_{2}}(t_{1},t_{2})-H_{x,z_{1},y,z_{2}}(t_{1}+h_{1},t_{2}) \\
&\quad \qquad -H_{x,z_{1},y,z_{2}}(t_{1},t_{2}+h_{2})+H_{x,z_{1},y,z_{2}}(t_{1}+h_{1},t_{2}+h_{2})%
\Bigg)\frac{S(t_{1},t_{2})dt_{1}dt_{2}}{\left( t_{1}t_{2}\right) ^{2}} \\
&\quad +\frac{1}{mn\pi ^{2}}\int_{h_{1}}^{\pi }\int_{h_{2}}^{\pi }\Bigg(%
H_{x,z_{1},y,z_{2}}(t_{1}+h_{1},t_{2})-H_{x,z_{1},y,z_{2}}(t_{1}+h_{1},t_{2}+h_{2})%
\Bigg) \\
&\qquad \times \Bigg(\frac{1}{\left( t_{1}t_{2}\right) ^{2}}-\frac{1}{\left(
(t_{1}+h_{1})t_{2}\right) ^{2}}\Bigg)S(t_{1},t_{2})dt_{1}dt_{2} \\
&\quad +\frac{1}{mn\pi ^{2}}\int_{h_{1}}^{\pi }\int_{h_{2}}^{\pi }\Bigg(%
H_{x,z_{1},y,z_{2}}(t_{1},t_{2}+h_{2})-H_{x,z_{1},y,z_{2}}(t_{1}+h_{1},t_{2}+h_{2})%
\Bigg) \\
&\qquad \times \Bigg(\frac{1}{\left( t_{1}t_{2}\right) ^{2}}-\frac{1}{\left(
t_{1}(t_{2}+h_{2})\right) ^{2}}\Bigg)S(t_{1},t_{2})dt_{1}dt_{2} \\
&\quad +\frac{1}{mn\pi ^{2}}\int_{h_{1}}^{\pi }\int_{h_{2}}^{\pi
}H_{x,z_{1},y,z_{2}}(t_{1}+h_{1},t_{2}+h_{2})\Bigg(\frac{1}{(t_{1}+h_{1})^{2}%
}-\frac{1}{t_{1}^{2}}\Bigg) \\
&\qquad \times \Bigg(\frac{1}{(t_{2}+h_{2})^{2}}-\frac{1}{t_{2}^{2}}\Bigg)%
S(t_{1},t_{2})dt_{1}dt_{2} \\
&:= J_{44}^{\left( 1,1\right) }+J_{44}^{\left( 1,2\right) }+J_{44}^{\left(
1,3\right) }+J_{44}^{\left( 1,4\right) }.
\end{align*}

It follows from Lemma \ref{le2} (iv) and $|S(t_{1},t_{2})|\leq 1$ for $%
t_{1}\in (h_{1},\pi )$, $t_{2}\in (h_{2},\pi )$ that 
\begin{align*}
\Vert J_{44}^{\left( 1,1\right) }\Vert _{p} &= \frac{\mathcal{O}(1)}{mn}%
\int_{h_{1}}^{\pi }\int_{h_{2}}^{\pi }\Big\|%
H_{x,z_{1},y,z_{2}}(t_{1},t_{2})-H_{x,z_{1},y,z_{2}}(t_{1}+h_{1},t_{2}) \\
& \quad -H_{x,z_{1},y,z_{2}}(t_{1},t_{2}+h_{2})+H_{x,z_{1},y,z_{2}}(t_{1}+h_{1},t_{2}+h_{2})%
\Big\|_{p}\frac{|S(t_{1},t_{2})|dt_{1}dt_{2}}{\left( t_{1}t_{2}\right) ^{2}}
\\
&= \frac{\mathcal{O}(1)}{mn}(v(|z_{1}|)+(v(|z_{2}|))\left( \frac{\omega
(h_{1})}{v(h_{1})}+\frac{\omega (h_{2})}{v(h_{2})}\right) \int_{h_{1}}^{\pi
}\int_{h_{2}}^{\pi }\frac{dt_{1}dt_{2}}{\left( t_{1}t_{2}\right) ^{2}} \\
&= \mathcal{O}(1)(v(|z_{1}|)+(v(|z_{2}|))\left( \frac{\omega (h_{1})}{%
v(h_{1})}+\frac{\omega (h_{2})}{v(h_{2})}\right) .
\end{align*}
Moreover, from Lemma \ref{le2} (iii) and using the same arguments as above,
we obtain 
\begin{align*}
\Vert J_{44}^{\left( 1,2\right) }\Vert _{p} &= \frac{\mathcal{O}(1)}{mn}%
\int_{h_{1}}^{\pi }\int_{h_{2}}^{\pi }\Big\|%
H_{x,z_{1},y,z_{2}}(t_{1}+h_{1},t_{2})-H_{x,z_{1},y,z_{2}}(t_{1}+h_{1},t_{2}+h_{2})%
\Big\|_{p} \\
& \quad \times \frac{1}{t_{2}^{2}}\Bigg(\frac{1}{t_{1}^{2}}-\frac{1}{\left(
t_{1}+h_{1}\right) ^{2}}\Bigg)|S(t_{1},t_{2})|dt_{1}dt_{2} \\
&= \frac{\mathcal{O}(1)}{mn}(v(|z_{1}|)+(v(|z_{2}|))\left( \frac{\omega
(h_{1})}{v(h_{1})}+\frac{\omega (h_{2})}{v(h_{2})}\right) \int_{h_{1}}^{\pi
}\int_{h_{2}}^{\pi }\frac{h_{1}\left( 2t_{1}+h_{1}\right) }{%
t_{2}^{2}t_{1}^{2}\left( t_{1}+h_{1}\right) ^{2}}dt_{1}dt_{2} \\
&= \frac{\mathcal{O}(1)}{m^{2}n}\left( (v(|z_{1}|)+(v(|z_{2}|)\right) \left( 
\frac{\omega (h_{1})}{v(h_{1})}+\frac{\omega (h_{2})}{v(h_{2})}\right)
\int_{h_{1}}^{\pi }\int_{h_{2}}^{\pi }\frac{1}{t_{2}^{2}t_{1}^{3}}%
dt_{1}dt_{2} \\
&= \mathcal{O}(1)(v(|z_{1}|)+(v(|z_{2}|))\left( \frac{\omega (h_{1})}{%
v(h_{1})}+\frac{\omega (h_{2})}{v(h_{2})}\right) .
\end{align*}
Similarly, we obtain 
\begin{equation*}
\Vert J_{44}^{\left( 1,3\right) }\Vert _{p}=\mathcal{O}%
(1)(v(|z_{1}|)+(v(|z_{2}|))\left( \frac{\omega (h_{1})}{v(h_{1})}+\frac{%
\omega (h_{2})}{v(h_{2})}\right) .
\end{equation*}
To estimate $\Vert J_{44}^{\left( 1,4\right) }\Vert _{p}$ we use Lemma \ref%
{le1} and Lemma \ref{le2} (iii) in order to get 
\begin{align*}
\Vert J_{44}^{\left( 1,4\right) }\Vert _{p} &= \frac{\mathcal{O}(1)}{mn}%
\int_{h_{1}}^{\pi }\int_{h_{2}}^{\pi }\Big\|%
H_{x,z_{1},y,z_{2}}(t_{1}+h_{1},t_{2}+h_{2})\Big\|_{p} \\
& \quad \times \Bigg(\frac{1}{(t_{1}+h_{1})^{2}}-\frac{1}{t_{1}^{2}}\Bigg)\Bigg(%
\frac{1}{(t_{2}+h_{2})^{2}}-\frac{1}{t_{2}^{2}}\Bigg)%
|S(t_{1},t_{2})|dt_{1}dt_{2} \\
&= \frac{\mathcal{O}(1)}{mn}\left( (v(|z_{1}|)+(v(|z_{2}|)\right)
\int_{h_{1}}^{\pi }\int_{h_{2}}^{\pi }\Bigg(\frac{\omega (t_{1}+h_{1})}{%
v(t_{1}+h_{1})}+\frac{\omega (t_{2}+h_{2})}{v(t_{2}+h_{2})}\Bigg) \\
& \quad \times \frac{h_{1}h_{2}\left( 2t_{1}+h_{1}\right) \left(
2t_{2}+h_{2}\right) }{\left( t_{1}t_{2}\left( t_{1}+h_{1}\right) \left(
t_{2}+h_{2}\right) \right) ^{2}}dt_{1}dt_{2} \\
&= \frac{\mathcal{O}(1)}{\left( mn\right) ^{2}}\left(
(v(|z_{1}|)+(v(|z_{2}|)\right) \\
& \quad \times \int_{h_{1}}^{\pi }\int_{h_{2}}^{\pi }\Bigg(\frac{\omega
(t_{1}+h_{1})}{\left( t_{1}+h_{1}\right) v(h_{1})t_{2}^{3}t_{1}^{2}}+\frac{%
\omega (t_{2}+h_{2})}{\left( t_{2}+h_{2}\right) v(h_{2})t_{1}^{3}t_{2}^{2}}%
\Bigg)dt_{1}dt_{2} \\
&= \frac{\mathcal{O}(1)}{\left( mn\right) ^{2}}\left(
(v(|z_{1}|)+(v(|z_{2}|)\right) \\
& \quad \times \Bigg(\frac{\omega (2h_{1})}{2h_{1}v(h_{1})}\int_{h_{1}}^{\pi
}\int_{h_{2}}^{\pi }\frac{1}{t_{2}^{3}t_{1}^{2}}dt_{1}dt_{2}+\frac{\omega
(2h_{2})}{2h_{2}v(h_{2})}\int_{h_{1}}^{\pi }\int_{h_{2}}^{\pi }\frac{1}{%
t_{2}^{2}t_{1}^{3}}dt_{1}dt_{2}\Bigg) \\
&= \mathcal{O}(1)(v(|z_{1}|)+(v(|z_{2}|))\left( \frac{\omega (h_{1})}{%
v(h_{1})}+\frac{\omega (h_{2})}{v(h_{2})}\right) .
\end{align*}
Whence, using the above estimates, we have 
\begin{equation}
\Vert J_{44}^{\left( 1\right) }\Vert _{p}=\mathcal{O}%
(1)(v(|z_{1}|)+(v(|z_{2}|))\left( \frac{\omega (h_{1})}{v(h_{1})}+\frac{%
\omega (h_{2})}{v(h_{2})}\right) .  \label{eqb1}
\end{equation}

Replacing $t_{2}$ with $t_{2}+h_{2}$ in 
\begin{equation*}
J_{44}^{\left( 2\right) }=\frac{1}{mn\pi ^{2}}\int_{0}^{h_{1}}\int_{h_{2}}^{%
\pi }H_{x,z_{1},y,z_{2}}(t_{1}+h_{1},t_{2})\frac{S(t_{1},t_{2})}{\left(
(t_{1}+h_{1})t_{2}\right) ^{2}}dt_{1}dt_{2}
\end{equation*}%
we get 
\begin{equation*}
J_{44}^{\left( 2\right) }=-\frac{1}{mn\pi ^{2}}\int_{0}^{h_{1}}\int_{0}^{\pi
-h_{2}}H_{x,z_{1},y,z_{2}}(t_{1}+h_{1},t_{2}+h_{2})\frac{S(t_{1},t_{2})}{%
\left( (t_{1}+h_{1})(t_{2}+h_{2})\right) ^{2}}dt_{1}dt_{2}
\end{equation*}%
and after adding them we obtain 
\begin{align*}
J_{44}^{\left( 2\right) } &= \frac{1}{2mn\pi ^{2}}\bigg(\int_{0}^{h_{1}}%
\int_{h_{2}}^{\pi }H_{x,z_{1},y,z_{2}}(t_{1}+h_{1},t_{2})\frac{S(t_{1},t_{2})%
}{\left( (t_{1}+h_{1})t_{2}\right) ^{2}}dt_{1}dt_{2} \\
& \quad -\int_{0}^{h_{1}}\int_{0}^{\pi
-h_{2}}H_{x,z_{1},y,z_{2}}(t_{1}+h_{1},t_{2}+h_{2})\frac{S(t_{1},t_{2})}{%
\left( (t_{1}+h_{1})(t_{2}+h_{2})\right) ^{2}}dt_{1}dt_{2}\bigg) \\
&= \frac{1}{2mn\pi ^{2}}\bigg(\int_{0}^{h_{1}}\int_{h_{2}}^{\pi }\bigg(\frac{%
H_{x,z_{1},y,z_{2}}(t_{1}+h_{1},t_{2})}{\left( (t_{1}+h_{1})t_{2}\right) ^{2}%
}-\frac{H_{x,z_{1},y,z_{2}}(t_{1}+h_{1},t_{2}+h_{2})}{\left(
(t_{1}+h_{1})(t_{2}+h_{2})\right) ^{2}}\bigg) \\
&  \qquad \times
S(t_{1},t_{2})dt_{1}dt_{2}\\
& \quad -\int_{0}^{h_{1}} \int_{0}^{h_{2}}H_{x,z_{1},y,z_{2}}(t_{1}+h_{1},t_{2}+h_{2})\frac{%
S(t_{1},t_{2})}{\left( (t_{1}+h_{1})(t_{2}+h_{2})\right) ^{2}}dt_{1}dt_{2} \\
& \quad +\int_{0}^{h_{1}}\int_{\pi -h_{2}}^{\pi
}H_{x,z_{1},y,z_{2}}(t_{1}+h_{1},t_{2}+h_{2})\frac{S(t_{1},t_{2})}{\left(
(t_{1}+h_{1})(t_{2}+h_{2})\right) ^{2}}dt_{1}dt_{2}\bigg) \\
&:= \sum\limits_{s=1}^{3}J_{44s}^{\left( 2\right) }.
\end{align*}

Since 
\begin{equation*}
|\sin 2mt_{1}\sin mt_{1}\sin 2nt_{2}\sin nt_{2}|\leq
4(mnt_{1}t_{2})^{2}\quad \text{for}\quad 0<t_{1}<\pi ,0<t_{2}<\pi ,
\end{equation*}%
then applying Lemma \ref{le1} and Lemma \ref{le2} (iii) we have 
\begin{align*}
\Vert J_{442}^{\left( 2\right) }\Vert _{p} &= \mathcal{O}\left(\frac{1}{mn}%
\right)\int_{0}^{h_{1}}\int_{0}^{h_{2}}\left\Vert
H_{x,z_{1},y,z_{2}}(t_{1}+h_{1},t_{2}+h_{2})\right\Vert _{p}\frac{%
(mnt_{1}t_{2})^{2}}{\left( (t_{1}+h_{1})(t_{2}+h_{2})\right) ^{2}}%
dt_{1}dt_{2} \\
&= \mathcal{O}(mn)\int_{0}^{h_{1}}\int_{0}^{h_{2}}\left(
v(|z_{1}|)+v(|z_{2}|)\right) \left( \frac{\omega (t_{1}+h_{1})}{%
v(t_{1}+h_{1})}+\frac{\omega (t_{2}+h_{2})}{v(t_{2}+h_{2})}\right)
dt_{1}dt_{2} \\
&= \mathcal{O}(mn)\left( v(|z_{1}|)+v(|z_{2}|)\right) \left( \frac{\omega
(2h_{1})}{v(2h_{1})}+\frac{\omega (2h_{2})}{v(2h_{2})}\right) h_{1}h_{2} \\
&= \mathcal{O}(1)\left( v(|z_{1}|)+v(|z_{2}|)\right) \left( \frac{\omega
(h_{1})}{v(h_{1})}+\frac{\omega (h_{2})}{v(h_{2})}\right) .
\end{align*}

Because of 
\begin{equation*}
|\sin 2mt_{1}\sin mt_{1}|\leq 2(mt_{1})^{2}\quad \text{for}\quad 0<t_{1}<\pi
\end{equation*}%
and 
\begin{equation*}
|\sin 2nt_{2}\sin nt_{2}|\leq 1\quad \text{for}\quad \pi -h_{2}<t_{2}<\pi ,
\end{equation*}%
it follows that 
\begin{align*}
\Vert J_{443}^{\left( 2\right) }\Vert _{p} &= \mathcal{O}\left(\frac{1}{mn}%
\right)\int_{0}^{h_{1}}\int_{\pi -h_{2}}^{\pi }\left\Vert
H_{x,z_{1},y,z_{2}}(t_{1}+h_{1},t_{2}+h_{2})\right\Vert _{p}\frac{%
(mt_{1})^{2}}{\left( (t_{1}+h_{1})(t_{2}+h_{2})\right) ^{2}}dt_{1}dt_{2} \\
&= \mathcal{O}\left(\frac{m}{n}\right)\int_{0}^{h_{1}}\int_{\pi -h_{2}}^{\pi }\left(
v(|z_{1}|)+v(|z_{2}|)\right) \left( \frac{\omega (t_{1}+h_{1})}{%
v(t_{1}+h_{1})}+\frac{\omega (t_{2}+h_{2})}{v(t_{2}+h_{2})}\right) \frac{%
dt_{1}dt_{2}}{(t_{2}+h_{2})^{2}} \\
&= \mathcal{O}\left(\frac{m}{n}\right)\left( v(|z_{1}|)+v(|z_{2}|)\right) \left( \frac{%
\omega (2h_{1})}{v(2h_{1})}h_{1}h_{2}+h_{1}\int_{\pi }^{\pi +h_{2}}\frac{%
\omega (\theta _{2})}{v(\theta _{2})}\frac{d\theta _{2}}{\theta _{2}^2}\right)
\\
&= \mathcal{O}\left(\frac{m}{n}\right)\left( v(|z_{1}|)+v(|z_{2}|)\right) \left( \frac{%
\omega (h_{1})}{v(h_{1})}h_{1}h_{2}+\frac{\omega (\pi +h_{2})}{v(\pi +h_{2})}%
\frac{h_{1}h_{2}}{\pi (\pi +h_{2})}\right) \\
&= \mathcal{O}\left( \frac{1}{n^{2}}\right) \left(
v(|z_{1}|)+v(|z_{2}|)\right) \left( \frac{\omega (h_{1})}{v(h_{1})}+\frac{%
\omega (\pi )}{v(\pi )}\right) \\
&= \mathcal{O}\left(\frac{1}{n}\right)\left( v(|z_{1}|)+v(|z_{2}|)\right) \left( \frac{%
\omega (h_{1})}{v(h_{1})}+\frac{\omega (h_{2})}{v(h_{2})}\right) .
\end{align*}

Now we write 
\begin{align*}
J_{441}^{\left( 2\right) } &= \frac{1}{2mn\pi ^{2}}\int_{0}^{h_{1}}%
\int_{h_{2}}^{\pi }\bigg(\frac{H_{x,z_{1},y,z_{2}}(t_{1}+h_{1},t_{2})}{%
\left( (t_{1}+h_{1})t_{2}\right) ^{2}}-\frac{%
H_{x,z_{1},y,z_{2}}(t_{1}+h_{1},t_{2}+h_{2})}{\left(
(t_{1}+h_{1})t_{2}\right) ^{2}} \\
& \quad +\frac{H_{x,z_{1},y,z_{2}}(t_{1}+h_{1},t_{2}+h_{2})}{\left(
(t_{1}+h_{1})t_{2}\right) ^{2}}-\frac{%
H_{x,z_{1},y,z_{2}}(t_{1}+h_{1},t_{2}+h_{2})}{\left(
(t_{1}+h_{1})(t_{2}+h_{2})\right) ^{2}}\bigg)S(t_{1},t_{2})dt_{1}dt_{2} \\
&= \frac{1}{2mn\pi ^{2}}\int_{0}^{h_{1}}\int_{h_{2}}^{\pi }\bigg(%
H_{x,z_{1},y,z_{2}}(t_{1}+h_{1},t_{2})-H_{x,z_{1},y,z_{2}}(t_{1}+h_{1},t_{2}+h_{2})%
\bigg) \\
&\qquad \times \frac{S(t_{1},t_{2})}{\left( (t_{1}+h_{1})t_{2}\right) ^{2}}%
dt_{1}dt_{2} \\
&\quad +\int_{0}^{h_{1}}\int_{h_{2}}^{\pi
}H_{x,z_{1},y,z_{2}}(t_{1}+h_{1},t_{2}+h_{2})\frac{S(t_{1},t_{2})}{\left(
t_{1}+h_{1}\right) ^{2}} \bigg(\frac{1}{t_{2}^{2}}-\frac{1}{\left( t_{2}+h_{2}\right) ^{2}}%
\bigg)dt_{1}dt_{2} \\
& :=J_{441}^{\left( 21\right) }+J_{441}^{\left( 22\right) }.
\end{align*}
For $\Vert J_{441}^{\left( 21\right) }\Vert _{p}$ we have 
\begin{align*}
\Vert J_{441}^{\left( 21\right) }\Vert _{p} &= \mathcal{O}\left(\frac{1}{mn}%
\right)\int_{0}^{h_{1}}\int_{h_{2}}^{\pi }\left\Vert
H_{x,z_{1},y,z_{2}}(t_{1}+h_{1},t_{2})-H_{x,z_{1},y,z_{2}}(t_{1}+h_{1},t_{2}+h_{2})\right\Vert _{p}
\\
& \quad \times \frac{(mt_{1})^{2}}{\left( (t_{1}+h_{1})t_{2}\right) ^{2}}%
dt_{1}dt_{2} \\
&= \mathcal{O}\left(\frac{m}{n}\right)\int_{0}^{h_{1}}\int_{h_{2}}^{\pi
}(v(|z_{1}|)+(v(|z_{2}|))\left( \frac{\omega (h_{1})}{v(h_{1})}+\frac{\omega
(h_{2})}{v(h_{2})}\right) \frac{dt_{1}dt_{2}}{t_{2}^{2}} \\
&= \mathcal{O}(1)(v(|z_{1}|)+(v(|z_{2}|))\left( \frac{\omega (h_{1})}{%
v(h_{1})}+\frac{\omega (h_{2})}{v(h_{2})}\right) ,
\end{align*}%
while for $\Vert J_{441}^{\left( 22\right) }\Vert _{p}$, we get
\begin{align*}
\Vert J_{441}^{\left( 22\right) }\Vert _{p} &= \mathcal{O}\left(\frac{1}{mn}%
\right)\int_{0}^{h_{1}}\int_{h_{2}}^{\pi }\left\Vert
H_{x,z_{1},y,z_{2}}(t_{1}+h_{1},t_{2}+h_{2})\right\Vert _{p} \\
& \quad \times \frac{(mt_{1})^{2}}{\left( t_{1}+h_{1}\right) ^{2}}\bigg(\frac{1}{%
t_{2}^{2}}-\frac{1}{\left( t_{2}+h_{2}\right) ^{2}}\bigg)dt_{1}dt_{2} \\
&= \mathcal{O}\left(\frac{m}{n}\right)\int_{0}^{h_{1}}\int_{h_{2}}^{\pi
}(v(|z_{1}|)+(v(|z_{2}|)) \\
& \quad \times \left( \frac{\omega (t_{1}+h_{1})}{v(t_{1}+h_{1})}+\frac{\omega
(t_{2}+h_{2})}{v(t_{2}+h_{2})}\right) \frac{h_{2}(2t_{2}+h_{2})}{%
t_{2}^{2}\left( t_{2}+h_{2}\right) ^{2}}dt_{1}dt_{2} \\
&= \mathcal{O}\left(\frac{m}{n^{2}}\right)(v(|z_{1}|)+(v(|z_{2}|))\bigg(%
\int_{0}^{h_{1}}\int_{h_{2}}^{\pi }\frac{\omega (t_{1}+h_{1})}{v(t_{1}+h_{1})%
}\frac{1}{t_{2}^{3}}dt_{1}dt_{2} \\
& \quad +\int_{0}^{h_{1}}\int_{h_{2}}^{\pi }\frac{\omega (t_{2}+h_{2})}{\left(
t_{2}+h_{2}\right) v(h_{2})}\frac{1}{t_{2}^{2}}dt_{1}dt_{2}\bigg) \\
&= \mathcal{O}\left(\frac{m}{n^{2}}\right)(v(|z_{1}|)+(v(|z_{2}|))\bigg(\frac{\omega
(2h_{1})}{v(h_{1})}\int_{h_{2}}^{\pi }\frac{1}{t_{2}^{3}}dt_{2} 
 +\frac{\omega (2h_{2})}{2h_{2}v(h_{2})}\int_{h_{2}}^{\pi }\frac{1}{%
t_{2}^{2}}dt_{2}\bigg)h_{1} \\
&= \mathcal{O}(1)(v(|z_{1}|)+(v(|z_{2}|))\left( \frac{\omega (h_{1})}{%
v(h_{1})}+\frac{\omega (h_{2})}{v(h_{2})}\right) .
\end{align*}
Whence, 
\begin{equation*}
\Vert J_{441}^{\left( 2\right) }\Vert _{p}=\mathcal{O}%
(1)(v(|z_{1}|)+(v(|z_{2}|))\left( \frac{\omega (h_{1})}{v(h_{1})}+\frac{%
\omega (h_{2})}{v(h_{2})}\right) .
\end{equation*}
Thus, we obtain 
\begin{equation}
\Vert J_{44}^{\left( 2\right) }\Vert _{p}=\mathcal{O}(1)\left(
v(|z_{1}|)+v(|z_{2}|)\right) \left( \frac{\omega (h_{1})}{v(h_{1})}+\frac{%
\omega (h_{2})}{v(h_{2})}\right) . \label{eqb2}
\end{equation}

By analogy, we find that 
\begin{equation}
\Vert J_{44}^{\left( 4\right) }\Vert _{p}=\mathcal{O}(1)\left(
v(|z_{1}|)+v(|z_{2}|)\right) \left( \frac{\omega (h_{1})}{v(h_{1})}+\frac{%
\omega (h_{2})}{v(h_{2})}\right) .  \label{eqb3}
\end{equation}

Once more, replacing $t_{2}$ with $t_{2}+h_{2}$ in 
\begin{equation*}
J_{44}^{\left( 3\right) }=\frac{1}{mn\pi ^{2}}\int_{\pi -h_{1}}^{\pi
}\!\!\int_{h_{2}}^{\pi }H_{x,z_{1},y,z_{2}}(t_{1}+h_{1},t_{2})\frac{%
S(t_{1},t_{2})}{\left( (t_{1}+h_{1})t_{2}\right) ^{2}}dt_{1}dt_{2}
\end{equation*}%
we get 
\begin{equation*}
J_{44}^{\left( 3\right) }=-\frac{1}{mn\pi ^{2}}\int_{\pi -h_{1}}^{\pi
}\!\!\int_{0}^{\pi -h_{2}}H_{x,z_{1},y,z_{2}}(t_{1}+h_{1},t_{2}+h_{2})\frac{%
S(t_{1},t_{2})}{\left( (t_{1}+h_{1})(t_{2}+h_{2})\right) ^{2}}dt_{1}dt_{2}
\end{equation*}%
and adding them side by side, we have 
\begin{align*}
J_{44}^{\left( 3\right) } &= \frac{1}{2mn\pi ^{2}}\bigg(\int_{\pi
-h_{1}}^{\pi }\!\!\int_{h_{2}}^{\pi }H_{x,z_{1},y,z_{2}}(t_{1}+h_{1},t_{2})%
\frac{S(t_{1},t_{2})}{\left( (t_{1}+h_{1})t_{2}\right) ^{2}}dt_{1}dt_{2} \\
& \quad -\int_{\pi -h_{1}}^{\pi }\!\!\int_{0}^{\pi
-h_{2}}H_{x,z_{1},y,z_{2}}(t_{1}+h_{1},t_{2}+h_{2})\frac{S(t_{1},t_{2})}{%
\left( (t_{1}+h_{1})(t_{2}+h_{2})\right) ^{2}}dt_{1}dt_{2}\bigg) \\
&= \frac{1}{2mn\pi ^{2}}\bigg(\int_{\pi -h_{1}}^{\pi }\!\!\int_{h_{2}}^{\pi }%
\bigg(\frac{H_{x,z_{1},y,z_{2}}(t_{1}+h_{1},t_{2})}{\left(
(t_{1}+h_{1})t_{2}\right) ^{2}}-\frac{%
H_{x,z_{1},y,z_{2}}(t_{1}+h_{1},t_{2}+h_{2})}{\left(
(t_{1}+h_{1})(t_{2}+h_{2})\right) ^{2}}\bigg) \\
&\quad \times S(t_{1},t_{2})dt_{1}dt_{2} \!- \!\int_{\pi -h_{1}}^{\pi
}\!\!\int_{0}^{h_{2}}H_{x,z_{1},y,z_{2}}(t_{1}+h_{1},t_{2}+h_{2})\frac{%
S(t_{1},t_{2})}{\left( (t_{1}+h_{1})(t_{2}+h_{2})\right) ^{2}}dt_{1}dt_{2} \\
&\quad +\int_{\pi -h_{1}}^{\pi }\!\!\int_{\pi -h_{2}}^{\pi
}H_{x,z_{1},y,z_{2}}(t_{1}+h_{1},t_{2}+h_{2})\frac{S(t_{1},t_{2})}{\left(
(t_{1}+h_{1})(t_{2}+h_{2})\right) ^{2}}dt_{1}dt_{2}\bigg) \\
&:= \sum\limits_{s=1}^{3}J_{44s}^{\left( 3\right) }
\end{align*}

Since 
\begin{equation*}
|\sin 2mt_{1}\sin mt_{1}|\leq 1,\quad \text{for}\quad \pi -h_{1}<t_{1}<\pi
\end{equation*}%
and 
\begin{equation*}
|\sin 2nt_{2}\sin nt_{2}|\leq 2(nt_{2})^{2}\quad \text{for}\quad 0<t_{2}<\pi
,
\end{equation*}%
then applying Lemma \ref{le1} and Lemma \ref{le2} (iii), we have 
\begin{align*}
\Vert J_{442}^{\left( 3\right) }\Vert _{p} &= \mathcal{O}\left(\frac{1}{mn}%
\right)\int_{\pi -h_{1}}^{\pi }\!\!\int_{0}^{h_{2}}\left\Vert
H_{x,z_{1},y,z_{2}}(t_{1}+h_{1},t_{2}+h_{2})\right\Vert _{p}\frac{%
(nt_{2})^{2}}{\left( (t_{1}+h_{1})(t_{2}+h_{2})\right) ^{2}}dt_{1}dt_{2} \\
&= \mathcal{O}\left(\frac{n}{m}\right)\int_{\pi -h_{1}}^{\pi
}\!\!\int_{0}^{h_{2}}\left( v(|z_{1}|)+v(|z_{2}|)\right)
 \left( \frac{\omega (t_{1}+h_{1})}{v(t_{1}+h_{1})}+\frac{\omega
(t_{2}+h_{2})}{v(t_{2}+h_{2})}\right) \frac{dt_{1}dt_{2}}{\left(
t_{1}+h_{1}\right) ^{2}} \\
&= \mathcal{O}\left(\frac{n}{m}\right)\left( v(|z_{1}|)+v(|z_{2}|)\right) \\
& \quad \times h_{2}\left( \int_{\pi -h_{1}}^{\pi }\frac{\omega (t_{1}+h_{1})}{%
v(t_{1}+h_{1})}\frac{dt_{1}}{\left( t_{1}+h_{1}\right) ^{2}}+\frac{\omega
(2h_{2})}{v(2h_{2})}\int_{\pi -h_{1}}^{\pi }\frac{dt_{1}}{\left(
t_{1}+h_{1}\right) ^{2}}\right) \\
&= \mathcal{O}\left(\frac{1}{m}\right)\left( v(|z_{1}|)+v(|z_{2}|)\right) \left(
\int_{\pi }^{\pi +h_{1}}\frac{\omega (\theta _{1})}{v(\theta _{1})}\frac{%
d\theta _{1}}{\theta _{1}^{2}}+\frac{\omega (2h_{2})}{v(2h_{2})}\int_{\pi
}^{\pi +h_{1}}\frac{d\theta _{1}}{\theta _{1}^{2}}\right) \\
&= \mathcal{O}\left(\frac{1}{m}\right)\left( v(|z_{1}|)+v(|z_{2}|)\right) \left( \frac{%
\omega (\pi +h_{1})}{v(\pi +h_{1})}+\frac{\omega (2h_{2})}{v(2h_{2})}\right)
h_{1} \\
&= \mathcal{O}\left( \frac{1}{m^{2}}\right) \left(
v(|z_{1}|)+v(|z_{2}|)\right) \left( \frac{\omega (\pi )}{v(\pi )}+\frac{%
\omega (h_{2})}{v(h_{2})}\right) \\
&= \mathcal{O}\left(\frac{1}{m}\right)\left( v(|z_{1}|)+v(|z_{2}|)\right) \left( \frac{%
\omega (h_{1})}{v(h_{1})}+\frac{\omega (h_{2})}{v(h_{2})}\right) .
\end{align*}
Furthermore, the inequality 
\begin{equation*}
|\sin 2mt_{1}\sin mt_{1}\sin 2nt_{2}\sin nt_{2}|\leq 1\quad \text{for}\quad
\pi -h_{1}<t_{1}<\pi ,\quad \pi -h_{2}<t_{2}<\pi ,
\end{equation*}%
implies 
\begin{align*}
\Vert J_{443}^{\left( 3\right) }\Vert _{p} &= \mathcal{O}\left(\frac{1}{mn}%
\right)\int_{\pi -h_{1}}^{\pi }\!\!\int_{\pi -h_{2}}^{\pi }\left\Vert
H_{x,z_{1},y,z_{2}}(t_{1}+h_{1},t_{2}+h_{2})\right\Vert _{p}\frac{%
dt_{1}dt_{2}}{\left( (t_{1}+h_{1})(t_{2}+h_{2})\right) ^{2}} \\
&= \mathcal{O}\left(\frac{1}{mn}\right)\int_{\pi -h_{1}}^{\pi }\!\!\int_{\pi
-h_{2}}^{\pi }\left( v(|z_{1}|)+v(|z_{2}|)\right) \\
& \quad \times \left( \frac{\omega (t_{1}+h_{1})}{v(t_{1}+h_{1})}+\frac{\omega
(t_{2}+h_{2})}{v(t_{2}+h_{2})}\right) \frac{dt_{1}dt_{2}}{\left(
(t_{1}+h_{1})(t_{2}+h_{2})\right) ^{2}} \\
&= \mathcal{O}\left(\frac{1}{mn}\right)\left( v(|z_{1}|)+v(|z_{2}|)\right) \left( \frac{%
\omega (\pi +h_{1})}{v(\pi +h_{1})}+\frac{\omega (\pi +h_{2})}{v(\pi +h_{2})}%
\right) \\
& \quad \times \int_{\pi -h_{1}}^{\pi }\!\!\int_{\pi -h_{2}}^{\pi }\frac{%
dt_{1}dt_{2}}{\left( (t_{1}+h_{1})(t_{2}+h_{2})\right) ^{2}} \\
&= \mathcal{O}\left(\frac{1}{mn}\right)\left( v(|z_{1}|)+v(|z_{2}|)\right) \frac{\omega
(2\pi )}{v(2\pi )}\int_{\pi }^{\pi +h_{1}}\!\!\int_{\pi }^{\pi +h_{2}}\frac{%
dt_{1}dt_{2}}{\left( t_{1}t_{2}\right) ^{2}} \\
&= \mathcal{O}\left( \frac{1}{\left( mn\right) ^{2}}\right) \left(
v(|z_{1}|)+v(|z_{2}|)\right) \frac{\omega (\pi )}{v(\pi )} \\
&= \mathcal{O}\left(\frac{1}{mn}\right)\left( v(|z_{1}|)+v(|z_{2}|)\right) \left( \frac{%
\omega (h_{1})}{v(h_{1})}+\frac{\omega (h_{2})}{v(h_{2})}\right) .
\end{align*}

It is obvious that 
\begin{align*}
J_{441}^{\left( 3\right) } &= \frac{1}{2mn\pi ^{2}}\int_{\pi -h_{1}}^{\pi
}\!\!\int_{h_{2}}^{\pi }\bigg(\frac{H_{x,z_{1},y,z_{2}}(t_{1}+h_{1},t_{2})}{%
\left( (t_{1}+h_{1})t_{2}\right) ^{2}}-\frac{%
H_{x,z_{1},y,z_{2}}(t_{1}+h_{1},t_{2}+h_{2})}{\left(
(t_{1}+h_{1})t_{2}\right) ^{2}} \\
& \quad +\frac{H_{x,z_{1},y,z_{2}}(t_{1}+h_{1},t_{2}+h_{2})}{\left(
(t_{1}+h_{1})t_{2}\right) ^{2}}-\frac{%
H_{x,z_{1},y,z_{2}}(t_{1}+h_{1},t_{2}+h_{2})}{\left(
(t_{1}+h_{1})(t_{2}+h_{2})\right) ^{2}}\bigg)S(t_{1},t_{2})dt_{1}dt_{2} \\
&= \int_{\pi -h_{1}}^{\pi }\!\!\int_{h_{2}}^{\pi }\bigg(%
H_{x,z_{1},y,z_{2}}(t_{1}+h_{1},t_{2})-H_{x,z_{1},y,z_{2}}(t_{1}+h_{1},t_{2}+h_{2})%
\bigg) \\
& \quad \times \frac{S(t_{1},t_{2})}{\left( (t_{1}+h_{1})t_{2}\right) ^{2}}%
dt_{1}dt_{2}+\int_{\pi -h_{1}}^{\pi }\!\!\int_{h_{2}}^{\pi
}H_{x,z_{1},y,z_{2}}(t_{1}+h_{1},t_{2}+h_{2}) \\
& \quad \times \frac{S(t_{1},t_{2})}{\left( t_{1}+h_{1}\right) ^{2}}\bigg(\frac{1}{%
t_{2}^{2}}-\frac{1}{\left( t_{2}+h_{2}\right) ^{2}}\bigg)%
dt_{1}dt_{2}:=J_{441}^{\left( 31\right) }+J_{441}^{\left( 32\right) }.
\end{align*}
For $\Vert J_{441}^{\left( 31\right) }\Vert _{p}$, we have 
\begin{align*}
\Vert J_{441}^{\left( 31\right) }\Vert _{p} &= \mathcal{O}\left(\frac{1}{mn}%
\right)\int_{\pi -h_{1}}^{\pi }\!\!\int_{h_{2}}^{\pi }\left\Vert
H_{x,z_{1},y,z_{2}}(t_{1}+h_{1},t_{2})-H_{x,z_{1},y,z_{2}}(t_{1}+h_{1},t_{2}+h_{2})\right\Vert _{p}
\\
& \quad \times \frac{dt_{1}dt_{2}}{\left( (t_{1}+h_{1})t_{2}\right) ^{2}} \\
&= \mathcal{O}\left(\frac{1}{mn}\right)(v(|z_{1}|)+(v(|z_{2}|))\left( \frac{\omega
(h_{1})}{v(h_{1})}+\frac{\omega (h_{2})}{v(h_{2})}\right) \int_{\pi
-h_{1}}^{\pi }\frac{dt_{1}}{\left( t_{1}+h_{1}\right) ^{2}}\int_{h_{2}}^{\pi
}\frac{dt_{2}}{t_{2}^{2}} \\
&= \mathcal{O}\left( \frac{1}{m^{2}}\right) (v(|z_{1}|)+(v(|z_{2}|))\left( 
\frac{\omega (h_{1})}{v(h_{1})}+\frac{\omega (h_{2})}{v(h_{2})}\right) ,
\end{align*}%
as far as for $\Vert J_{441}^{\left( 32\right) }\Vert _{p}$ we obtain
\begin{align*}
\Vert J_{441}^{\left( 32\right) }\Vert _{p} &= \mathcal{O}\left(\frac{1}{mn}%
\right)\int_{\pi -h_{1}}^{\pi }\!\!\int_{h_{2}}^{\pi }\left\Vert
H_{x,z_{1},y,z_{2}}(t_{1}+h_{1},t_{2}+h_{2})\right\Vert _{p} \\
& \quad \times \frac{1}{\left( t_{1}+h_{1}\right) ^{2}}\bigg(\frac{1}{t_{2}^{2}}-%
\frac{1}{\left( t_{2}+h_{2}\right) ^{2}}\bigg)dt_{1}dt_{2} \\
&= \mathcal{O}\left(\frac{1}{mn}\right)\int_{\pi -h_{1}}^{\pi }\!\!\int_{h_{2}}^{\pi
}(v(|z_{1}|)+(v(|z_{2}|)) \\
& \quad \times \left( \frac{\omega (t_{1}+h_{1})}{v(t_{1}+h_{1})}+\frac{\omega
(t_{2}+h_{2})}{v(t_{2}+h_{2})}\right) \frac{1}{\left( t_{1}+h_{1}\right) ^{2}%
}\frac{h_{2}(2t_{2}+h_{2})}{t_{2}^{2}\left( t_{2}+h_{2}\right) ^{2}}%
dt_{1}dt_{2} \\
&= \mathcal{O}\left(\frac{1}{mn^{2}}\right)(v(|z_{1}|)+(v(|z_{2}|))\bigg(\frac{\omega
(\pi +h_{1})}{v(\pi +h_{1})}\int_{h_{2}}^{\pi }\frac{dt_{2}}{t_{2}^{3}} 
+\frac{\omega (2h_{2})}{2h_{2}v(h_{2})}\int_{h_{2}}^{\pi }\frac{dt_{2}}{%
t_{2}^{2}}\bigg)h_{1} \\
&= \mathcal{O}\left( \frac{1}{m^{2}}\right) (v(|z_{1}|)+(v(|z_{2}|))\left( 
\frac{\omega (\pi )}{v(\pi )}+\frac{\omega (h_{2})}{v(h_{2})}\right) \\
&= \mathcal{O}\left( \frac{1}{m}\right) (v(|z_{1}|)+(v(|z_{2}|))\left( \frac{%
\omega (h_{1})}{v(h_{1})}+\frac{\omega (h_{2})}{v(h_{2})}\right) .
\end{align*}

Hence, we obtain
\begin{equation*}
\Vert J_{441}^{\left( 3\right) }\Vert _{p}=\mathcal{O}\left( \frac{1}{m}%
\right) (v(|z_{1}|)+(v(|z_{2}|))\left( \frac{\omega (h_{1})}{v(h_{1})}+\frac{%
\omega (h_{2})}{v(h_{2})}\right) .
\end{equation*}

Consequently, we get 
\begin{equation}
\Vert J_{44}^{\left( 3\right) }\Vert _{p}=\mathcal{O}\left( \frac{1}{m}%
\right) (v(|z_{1}|)+(v(|z_{2}|))\left( \frac{\omega (h_{1})}{v(h_{1})}+\frac{%
\omega (h_{2})}{v(h_{2})}\right) .  \label{eqb4}
\end{equation}

By the same way, we obtain 
\begin{equation}
\Vert J_{44}^{\left( 5\right) }\Vert _{p}=\mathcal{O}\left( \frac{1}{n}%
\right) (v(|z_{1}|)+(v(|z_{2}|))\left( \frac{\omega (h_{1})}{v(h_{1})}+\frac{%
\omega (h_{2})}{v(h_{2})}\right) .  \label{eqb5}
\end{equation}

For $J_{44}^{\left( 6\right) }$, we have 
\begin{align}\label{eqb6}
\begin{split}
\Vert J_{44}^{\left( 6\right) }\Vert _{p}& = \mathcal{O}\left( \frac{1}{mn}%
\right) \int_{0}^{h_{1}}\!\!\int_{0}^{h_{2}}\left\Vert
H_{x,z_{1},y,z_{2}}(t_{1}+h_{1},t_{2}+h_{2})\right\Vert _{p}\frac{%
(mnt_{1}t_{2})^{2}dt_{1}dt_{2}}{\left( (t_{1}+h_{1})(t_{2}+h_{2})\right) ^{2}%
}\\
&= \mathcal{O}\left( mn\right)
\int_{0}^{h_{1}}\!\!\int_{0}^{h_{2}}(v(|z_{1}|)+(v(|z_{2}|))\left( \frac{%
\omega (t_{1}+h_{1})}{v(t_{1}+h_{1})}+\frac{\omega (t_{2}+h_{2})}{%
v(t_{2}+h_{2})}\right) dt_{1}dt_{2} \\
&= \mathcal{O}\left( 1\right) (v(|z_{1}|)+(v(|z_{2}|))\left( \frac{\omega
(h_{1})}{v(h_{1})}+\frac{\omega (h_{2})}{v(h_{2})}\right) . 
\end{split}
\end{align}
Similarly as in the estimation of $J_{44}^{\left( 2\right) }$, we have%
\begin{align*}
J_{44}^{\left( 7\right) } &= \frac{1}{2mn\pi ^{2}}\int_{0}^{h_{1}}%
\int_{h_{2}}^{\pi }H_{x,z_{1},y,z_{2}}(t_{1}+h_{1},t_{2}+h_{2})\frac{%
S(t_{1},t_{2})}{\left( (t_{1}+h_{1})(t_{2}+h_{2})\right) ^{2}}dt_{1}dt_{2} \\
& \quad -\frac{1}{2mn\pi ^{2}}\int_{0}^{h_{1}}\int_{0}^{\pi
-h_{2}}H_{x,z_{1},y,z_{2}}(t_{1}+h_{1},t_{2}+2h_{2})\frac{S(t_{1},t_{2})}{%
\left( (t_{1}+h_{1})(t_{2}+2h_{2})\right) ^{2}}dt_{1}dt_{2} \\
&= \frac{1}{2mn\pi ^{2}}\bigg(\int_{0}^{h_{1}}\int_{h_{2}}^{\pi }\bigg(\frac{%
H_{x,z_{1},y,z_{2}}(t_{1}+h_{1},t_{2}+h_{2})}{\left(
(t_{1}+h_{1})(t_{2}+h_{2})\right) ^{2}}-\frac{%
H_{x,z_{1},y,z_{2}}(t_{1}+h_{1},t_{2}+2h_{2})}{\left(
(t_{1}+h_{1})(t_{2}+2h_{2})\right) ^{2}}\bigg) \\
&\qquad \times
S(t_{1},t_{2})dt_{1}dt_{2} \\
& \quad -\int_{0}^{h_{1}}%
\int_{0}^{h_{2}}H_{x,z_{1},y,z_{2}}(t_{1}+h_{1},t_{2}+2h_{2})\frac{%
S(t_{1},t_{2})}{\left( (t_{1}+h_{1})(t_{2}+2h_{2})\right) ^{2}}dt_{1}dt_{2}
\\
& \quad +\int_{0}^{h_{1}}\int_{\pi -h_{2}}^{\pi
}H_{x,z_{1},y,z_{2}}(t_{1}+h_{1},t_{2}+2h_{2})\frac{S(t_{1},t_{2})}{\left(
(t_{1}+h_{1})(t_{2}+2h_{2})\right) ^{2}}dt_{1}dt_{2}\bigg)
\end{align*}%
and consequently%
\begin{equation}
\Vert J_{44}^{\left( 7\right) }\Vert _{p}=\mathcal{O}\left( 1\right)
(v(|z_{1}|)+(v(|z_{2}|))\left( \frac{\omega (h_{1})}{v(h_{1})}+\frac{\omega
(h_{2})}{v(h_{2})}\right) .  \label{eqb7}
\end{equation}

Analogously, 
\begin{equation}
\Vert J_{44}^{\left( 9\right) }\Vert _{p}=\mathcal{O}\left( 1\right)
(v(|z_{1}|)+(v(|z_{2}|))\left( \frac{\omega (h_{1})}{v(h_{1})}+\frac{\omega
(h_{2})}{v(h_{2})}\right) .  \label{eqb8}
\end{equation}

For $J_{44}^{\left( 8\right) }$, we have 
\begin{align}\label{eqb9}
\begin{split}
\Vert J_{44}^{\left( 8\right) }\Vert _{p} &= \mathcal{O}\left( \frac{1}{mn}%
\right) \int_{0}^{h_{1}}\!\!\int_{\pi -h_{2}}^{\pi }\left\Vert
H_{x,z_{1},y,z_{2}}(t_{1}+h_{1},t_{2}+h_{2})\right\Vert _{p}\frac{%
(mt_{1})^{2}dt_{1}dt_{2}}{\left( (t_{1}+h_{1})(t_{2}+h_{2})\right) ^{2}} \\
&= \mathcal{O}\left( \frac{m}{n}\right) \int_{0}^{h_{1}}\!\!\int_{\pi
-h_{2}}^{\pi }(v(|z_{1}|)+(v(|z_{2}|)) \\
& \quad \times \left( \frac{\omega (t_{1}+h_{1})}{v(t_{1}+h_{1})}+\frac{\omega
(t_{2}+h_{2})}{v(t_{2}+h_{2})}\right) \frac{dt_{1}dt_{2}}{\left(
t_{2}+h_{2}\right) ^{2}} \\
&= \mathcal{O}\left( \frac{m}{n}\right) (v(|z_{1}|)+(v(|z_{2}|)) 
 \left( \frac{\omega (h_{1})}{v(h_{1})}h_{2}+\int_{\pi -h_{2}}^{\pi }%
\frac{\omega (t_{2}+h_{2})}{v(t_{2}+h_{2})}\frac{dt_{2}}{\left(
t_{2}+h_{2}\right) ^{2}}\right) h_{1} \\
&= \mathcal{O}\left( \frac{1}{n}\right) (v(|z_{1}|)+(v(|z_{2}|)) 
 \left( \frac{\omega (h_{1})}{v(h_{1})}h_{2}+\int_{\pi }^{\pi +h_{2}}%
\frac{\omega (\theta _{2})}{v(\theta _{2})}\frac{d\theta _{2}}{\theta
_{2}^{2}}\right)\\
&= \mathcal{O}\left( \frac{1}{n^{2}}\right) (v(|z_{1}|)+(v(|z_{2}|))\left( 
\frac{\omega (h_{1})}{v(h_{1})}+\frac{\omega (\pi )}{v(\pi )}\right) \\
&= \mathcal{O}\left( \frac{1}{n}\right) (v(|z_{1}|)+(v(|z_{2}|))\left( \frac{%
\omega (h_{1})}{v(h_{1})}+\frac{\omega (h_{2})}{v(h_{2})}\right) . 
\end{split}
\end{align}

By analogy, we also get 
\begin{equation}
\Vert J_{44}^{\left( 11\right) }\Vert _{p}=\mathcal{O}\left( \frac{1}{m}%
\right) (v(|z_{1}|)+(v(|z_{2}|))\left( \frac{\omega (h_{1})}{v(h_{1})}+\frac{%
\omega (h_{2})}{v(h_{2})}\right) .  \label{eqb10}
\end{equation}

Now we need to give the upper bound for $\Vert J_{44}^{\left( 10\right)
}\Vert _{p}$. We have 
\begin{align}\label{eqb11}
\begin{split}
\Vert J_{44}^{\left( 10\right) }\Vert _{p} &= \mathcal{O}\left( \frac{1}{mn}%
\right) \int_{h_{1}}^{\pi }\!\!\int_{\pi -h_{2}}^{\pi }\left\Vert
H_{x,z_{1},y,z_{2}}(t_{1}+h_{1},t_{2}+h_{2})\right\Vert _{p}\frac{%
dt_{1}dt_{2}}{\left( (t_{1}+h_{1})(t_{2}+h_{2})\right) ^{2}} \\
&= \mathcal{O}\left( \frac{1}{mn}\right) \int_{h_{1}}^{\pi }\!\!\int_{\pi
-h_{2}}^{\pi }(v(|z_{1}|)+(v(|z_{2}|)) \\
& \quad \times \left( \frac{\omega (t_{1}+h_{1})}{v(t_{1}+h_{1})}+\frac{\omega
(t_{2}+h_{2})}{v(t_{2}+h_{2})}\right) \frac{dt_{1}dt_{2}}{\left(
t_{1}+h_{1}\right) ^{2}}\\
&= \mathcal{O}\left( \frac{1}{mn}\right) (v(|z_{1}|)+(v(|z_{2}|))\left( 
\frac{\omega (\pi )}{v(\pi )}\frac{m}{n}+m\int_{\pi }^{\pi +h_{2}}\frac{%
\omega (u_{2})du_{2}}{v(u_{2})u_{2}^{2}}\right) \\
&= \mathcal{O}\left( \frac{1}{n^{2}}\right) (v(|z_{1}|)+(v(|z_{2}|))\frac{%
\omega (\pi )}{v(\pi )} \\
&= \mathcal{O}\left( \frac{1}{n}\right) (v(|z_{1}|)+(v(|z_{2}|))\frac{\omega
(h_{2})}{v(h_{2})}.
\end{split}
\end{align}

By analogy, we obtain 
\begin{equation}
\Vert J_{44}^{\left( 12\right) }\Vert _{p}=\mathcal{O}\left( \frac{1}{m}%
\right) (v(|z_{1}|)+(v(|z_{2}|))\frac{\omega (h_{1})}{v(h_{1})}.
\label{eqb12}
\end{equation}

Finally, we have%
\begin{align}\label{eqb13}
\begin{split}
\Vert J_{44}^{\left( 13\right) }\Vert _{p} &= \mathcal{O}\left( \frac{1}{mn}%
\right) \int_{\pi -h_{1}}^{\pi }\!\!\int_{\pi -h_{2}}^{\pi }\left\Vert
H_{x,z_{1},y,z_{2}}(t_{1}+h_{1},t_{2}+h_{2})\right\Vert _{p}\frac{%
dt_{1}dt_{2}}{\left( (t_{1}+h_{1})(t_{2}+h_{2})\right) ^{2}} \\
&= \mathcal{O}\left( \frac{1}{mn}\right) \int_{\pi -h_{1}}^{\pi
}\!\!\int_{\pi -h_{2}}^{\pi }(v(|z_{1}|)+(v(|z_{2}|)) \\
& \quad \times \left( \frac{\omega (t_{1}+h_{1})}{v(t_{1}+h_{1})}+\frac{\omega
(t_{2}+h_{2})}{v(t_{2}+h_{2})}\right) \frac{dt_{1}dt_{2}}{\left(
(t_{1}+h_{1})(t_{2}+h_{2})\right) ^{2}} \\
&= \mathcal{O}\left( \frac{1}{mn}\right) (v(|z_{1}|)+(v(|z_{2}|)) \\
& \quad \times \left( h_{2}\int_{\pi -h_{1}}^{\pi }\frac{\omega (t_{1}+h_{1})}{%
v(t_{1}+h_{1})}\frac{dt_{1}}{(t_{1}+h_{1})^{2}}+h_{1}\int_{\pi -h_{2}}^{\pi }%
\frac{\omega (t_{2}+h_{2})}{v(t_{2}+h_{2})}\frac{dt_{2}}{(t_{2}+h_{2})^{2}}%
\right) \\
&= \mathcal{O}\left( \frac{1}{mn}\right) (v(|z_{1}|)+(v(|z_{2}|))\left(
h_{2}\int_{\pi }^{\pi +h_{1}}\frac{\omega (\theta _{1})}{v(\theta _{1})}%
\frac{d\theta _{1}}{\theta _{1}^{2}}+h_{1}\int_{\pi }^{\pi +h_{2}}\frac{%
\omega (\theta _{2})}{v(\theta _{2})}\frac{d\theta _{2}}{\theta _{2}^{2}}%
\right) \\
&= \mathcal{O}\left( \frac{1}{\left( mn\right) ^{2}}\right)
(v(|z_{1}|)+(v(|z_{2}|))\frac{\omega (\pi )}{v(\pi )}\\
& = \mathcal{O}\left( \frac{1}{mn}\right) (v(|z_{1}|)+(v(|z_{2}|))\left( \frac{%
\omega (h_{1})}{v(h_{1})}+\frac{\omega (h_{2})}{v(h_{2})}\right) .
\end{split}
\end{align}

Whence, using \eqref{eqb1}-\eqref{eqb13}, we obtain 
\begin{equation*}
\left\Vert J_{44}\right\Vert _{p}=\mathcal{O}\left( 1\right)
(v(|z_{1}|)+(v(|z_{2}|))\left( \frac{\omega (h_{1})}{v(h_{1})}+\frac{\omega
(h_{2})}{v(h_{2})}\right) .
\end{equation*}%
Combining partial estimates, we get%
\begin{equation*}
\sup_{z_{1}\neq 0,\,\,z_{2}\neq 0}\frac{\Vert
D_{m,n}(x+z_{1},y+z_{2})-D_{m,n}(x,y)\Vert _{p}}{v(|z_{1}|)+v(|z_{2}|)}=%
\mathcal{O}\left( 1\right) \left( \frac{\omega (h_{1})}{v(h_{1})}+\frac{%
\omega (h_{2})}{v(h_{2})}\right) .
\end{equation*}%
Procceding as in the similar lines, it can be proved that%
\begin{equation*}
\Vert D_{m,n}\Vert _{p}=\mathcal{O}\left( 1\right) \left( \omega
(h_{1})+\omega (h_{2})\right) .
\end{equation*}%
Hence%
\begin{equation*}
\Vert D_{m,n}\Vert _{p}^{(v,v)}=\mathcal{O}\left( 1\right) \left( \frac{%
\omega (h_{1})}{v(h_{1})}+\frac{\omega (h_{2})}{v(h_{2})}\right)
\end{equation*}%
and this ends our proof.
\end{proof}}

Now we can finish this section by deriving some particular results. For this, we need to  
specialize the functions $\omega(t)$ and $v(t)$ in our theorem. In fact, taking $\omega(t)=t^{\alpha}$ and $v(t)=t^{\beta}$, $0\leq \beta< \alpha\leq 1$, in Theorem \ref{the01} we get:

\begin{corollary} 
If $f\in \text{Lip}(\alpha,\beta, p)$, $p\geq 1$, $0\leq \beta< \alpha\leq 1$,  then 
\begin{equation*}
\Vert \sigma _{m,2m;n,2n}(f)-f\Vert _{p}^{(v,v)}= \mathcal{O} \left( \frac{1
}{m^{\alpha -\beta}}+\frac{1
}{n^{\alpha -\beta}}\right) 
\end{equation*}%
for all $m,n\in 
\mathbb{N}
$.
\end{corollary}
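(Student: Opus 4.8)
The plan is to obtain this corollary directly from Theorem~\ref{the01} by specializing the two moduli to power functions, namely $\omega(t)=t^{\alpha}$ and $v(t)=t^{\beta}$ with $0\le\beta<\alpha\le1$. First I would check that these choices are admissible. For $0<\gamma\le1$ the function $t\mapsto t^{\gamma}$ is positive, continuous and non-decreasing on $[0,2\pi]$, vanishes at $t=0$, is subadditive because $(\delta_{1}+\delta_{2})^{\gamma}\le\delta_{1}^{\gamma}+\delta_{2}^{\gamma}$ for $\gamma\le1$, and satisfies $(\lambda\delta)^{\gamma}=\lambda^{\gamma}\delta^{\gamma}\le(\lambda+1)\delta^{\gamma}$ since $\lambda^{\gamma}\le\lambda+1$ for $\lambda\ge0$; hence $\omega$ and $v$ fulfil conditions (i)--(iii) for moduli of continuity. (In the endpoint case $\beta=0$ one reads $v\equiv1$, so that $H_{p}^{(v,v)}$ coincides with $L_{p}$ up to an equivalent norm and the assertion reduces to the plain $L_{p}$ rate; it therefore suffices to treat $0<\beta<\alpha\le1$.)

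Next I would verify the one structural hypothesis of Theorem~\ref{the01}: with the above choices,
\[
\frac{\omega(t)}{v(t)}=t^{\alpha-\beta},
\]
which is non-decreasing in $t$ precisely because $\alpha-\beta>0$ --- this is exactly where the strict inequality $\beta<\alpha$ enters. Since the hypothesis on $f$ is membership in the space $H_{p}^{(\omega,\omega)}$ to which the theorem applies, Theorem~\ref{the01} is available verbatim and yields
\[
\bigl\Vert\sigma_{m,2m;n,2n}(f)-f\bigr\Vert_{p}^{(v,v)}=\mathcal{O}\!\left(\frac{\omega(h_{1})}{v(h_{1})}+\frac{\omega(h_{2})}{v(h_{2})}\right).
\]

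Finally I would substitute $h_{1}=\frac{\pi}{m}$, $h_{2}=\frac{\pi}{n}$ and simplify. Since $\dfrac{\omega(h_{i})}{v(h_{i})}=h_{i}^{\alpha-\beta}$, we get $\dfrac{\omega(h_{1})}{v(h_{1})}=\bigl(\tfrac{\pi}{m}\bigr)^{\alpha-\beta}=\pi^{\alpha-\beta}m^{-(\alpha-\beta)}=\mathcal{O}\bigl(m^{-(\alpha-\beta)}\bigr)$, and likewise $\dfrac{\omega(h_{2})}{v(h_{2})}=\mathcal{O}\bigl(n^{-(\alpha-\beta)}\bigr)$, whence the two terms add up to the claimed bound $\mathcal{O}\bigl(m^{-(\alpha-\beta)}+n^{-(\alpha-\beta)}\bigr)$ for every $m,n\in\mathbb{N}$. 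I do not expect any genuine obstacle here: the corollary is nothing more than a substitution into the already established Theorem~\ref{the01}, and the only points calling for a moment's care are the verification that the power functions satisfy (i)--(iii), the monotonicity of the quotient $t^{\alpha-\beta}$ (which would fail for $\alpha\le\beta$), and the degenerate endpoint $\beta=0$.
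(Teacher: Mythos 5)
Your proof is correct and is exactly the paper's route: the paper derives this corollary by the one-line substitution $\omega(t)=t^{\alpha}$, $v(t)=t^{\beta}$ into Theorem~\ref{the01}, and your verification that these are admissible moduli with $\omega(t)/v(t)=t^{\alpha-\beta}$ non-decreasing, followed by evaluating at $h_{1}=\pi/m$, $h_{2}=\pi/n$, is precisely what is implicit there. Your extra care about the endpoint $\beta=0$ and the strictness of $\beta<\alpha$ only makes the argument more complete than the paper's.
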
 

For $p=\infty$ in above corollary, we obtain the following.

\begin{corollary} 
If $f\in \text{H}_{(\alpha,\beta)}$, $0\leq \beta< \alpha\leq 1$,  then 
\begin{equation*}
\Vert \sigma _{m,2m;n,2n}(f)-f\Vert _{\alpha,\beta}= \mathcal{O} \left( \frac{1
}{m^{\alpha -\beta}}+\frac{1
}{n^{\alpha -\beta}}\right) 
\end{equation*}%
for all $m,n\in 
\mathbb{N}
$.
\end{corollary}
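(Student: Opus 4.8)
The plan is to derive the statement as a specialization of Theorem~\ref{the01}, passing through the preceding corollary in which $p$ is general. First I would put $\omega(t)=t^{\alpha}$ and $v(t)=t^{\beta}$ (with $0\le\beta<\alpha\le 1$, and $\beta>0$ so that the classes are non-trivial) and check admissibility in Theorem~\ref{the01}: for $0<\gamma\le 1$ the function $t\mapsto t^{\gamma}$ is positive, continuous and non-decreasing on $[0,2\pi]$, vanishes at $0$, is subadditive (concavity of $s\mapsto s^{\gamma}$ gives $(\delta_1+\delta_2)^{\gamma}\le\delta_1^{\gamma}+\delta_2^{\gamma}$), and satisfies $(\lambda\delta)^{\gamma}=\lambda^{\gamma}\delta^{\gamma}\le(\lambda+1)\delta^{\gamma}$; thus $\omega$ and $v$ are moduli of continuity in the sense of Section~2, and $\omega(t)/v(t)=t^{\alpha-\beta}$ is non-decreasing in $t$ precisely because $\alpha\ge\beta$. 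Since membership $f\in\text{Lip}(\alpha,\beta,p)$ is exactly the bound $\|f(x+t_1,y+t_2)-f(x,y)\|_p=\mathcal{O}(|t_1|^{\alpha}+|t_2|^{\beta})$, the hypothesis $f\in H_p^{(\omega,\omega)}$ of Theorem~\ref{the01} holds, and the theorem delivers
\[
\|\sigma_{m,2m;n,2n}(f)-f\|_p^{(v,v)}=\mathcal{O}\!\left(\frac{\omega(h_1)}{v(h_1)}+\frac{\omega(h_2)}{v(h_2)}\right)=\mathcal{O}\!\left(h_1^{\alpha-\beta}+h_2^{\alpha-\beta}\right);
\]
inserting $h_1=\pi/m$, $h_2=\pi/n$ gives the general-$p$ corollary $\mathcal{O}(m^{-(\alpha-\beta)}+n^{-(\alpha-\beta)})$.

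To reach the present statement I would then take $p=\infty$. For $f$ continuous and $2\pi$-periodic in each variable, Section~2 records that $\|\cdot\|_{\infty}=\|\cdot\|_C$ and that $\text{Lip}(\alpha,\beta,\infty)$ coincides with the H\"older class $\text{H}_{(\alpha,\beta)}$; identifying the $H_{\infty}^{(v,v)}$-norm (with $v(t)=t^{\beta}$) with the H\"older norm $\|\cdot\|_{\alpha,\beta}$ appearing in the statement, the general-$p$ corollary specialized at $p=\infty$ yields directly
\[
\|\sigma_{m,2m;n,2n}(f)-f\|_{\alpha,\beta}=\mathcal{O}\!\left(\frac{1}{m^{\alpha-\beta}}+\frac{1}{n^{\alpha-\beta}}\right)
\]
for all $m,n\in\mathbb{N}$, as asserted.

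The argument is therefore pure substitution and needs no estimate beyond Theorem~\ref{the01}. The only step I would pause over — and verify carefully first — is the book-keeping in the limiting case $p=\infty$: matching the seminorm denominator $v(|z_1|)+v(|z_2|)=|z_1|^{\beta}+|z_2|^{\beta}$ of $\|\cdot\|_{\infty}^{(v,v)}$ against the mixed-exponent form $|t_1|^{\alpha}+|t_2|^{\beta}$ of the classical H\"older seminorm recalled in Section~2, and disposing of the degenerate choice $\beta=0$. Neither is an analytic obstacle, since only the exponent $\alpha-\beta$ governing the decay rate survives in the final bound.
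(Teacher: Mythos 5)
Your proposal is correct and follows exactly the paper's route: specialize Theorem~\ref{the01} with $\omega(t)=t^{\alpha}$, $v(t)=t^{\beta}$ to obtain the preceding corollary for general $p$, then set $p=\infty$ and identify $\mathrm{Lip}(\alpha,\beta,\infty)$ with $\mathrm{H}_{(\alpha,\beta)}$. The one subtlety you flag --- reconciling the seminorm denominator $|z_1|^{\beta}+|z_2|^{\beta}$ of $\Vert\cdot\Vert_{\infty}^{(v,v)}$ with the mixed-exponent denominator $|t_1|^{\alpha}+|t_2|^{\beta}$ of $\Vert\cdot\Vert_{\alpha,\beta}$ --- is likewise left implicit in the paper, whose entire proof is the phrase ``for $p=\infty$ in the above corollary,'' so your write-up is if anything more careful than the original.
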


\section{A generalization of Theorem \ref{the01}}

We denote by $W(L^p((-\pi,\pi)^2); \beta_1, \beta_2)$ the weighted space $L^p((-\pi,\pi)^2)$ with weight function $\left|\sin \left(\frac{x}{2}\right)\right|^{\beta_1 p}\left|\sin \left(\frac{y}{2}\right)\right|^{\beta_2 p}$, ($\beta_1,\beta_2\geq 0$), and endowed with norm 
$$\|f\|_{p;\beta_1,\beta_2}:=\left(\frac{1}{(2\pi)^2}\int_{-\pi}^{\pi}\int_{-\pi}^{\pi}\left|f(x,y)\right|^p\left|\sin \left(\frac{x}{2}\right)\right|^{\beta_1 p}\left|\sin \left(\frac{y}{2}\right)\right|^{\beta_2 p}dxdy\right)^p$$
for $1\leq p<\infty$, and
$$\|f\|_{p;\beta_1,\beta_2}:=\text{ess}\!\!\!\!\!\sup_{-\pi\leq x,y \leq \pi }\left\{\left|f(x,y)\right|\left|\sin \left(\frac{x}{2}\right)\right|^{\beta_1 }\left|\sin \left(\frac{y}{2}\right)\right|^{\beta_2 }\right\}$$
for $p=\infty$ (for $\beta_1=0,\beta_2=0$ see \cite{US}).

Acting accordingly, we define the space $H_{p;\beta_1,\beta_2}^{(\omega_1, \omega_2)}$ by 
\begin{equation*}
H_{p;\beta_1,\beta_2}^{(\omega_1, \omega_2 )}:=\left\{f\in W(L^p((-\pi ,\pi )^2); \beta_1, \beta_2), p\geq 1:
A(f;\omega_1 , \omega_2 ;\beta_1 ,\beta_2 )<\infty \right\},
\end{equation*}
where 
\begin{equation*}
A(f; \omega_1 , \omega_2 ;\beta_1 ,\beta_2 ):=\sup_{t_1\neq 0,\,\,t_2\neq 0 }\frac{\|f(x +t_1,y
+t_2)-f(x,y)\|_{p ;\beta_1 ,\beta_2 }}{\omega_1 (|t_1|)+\omega_2 (|t_2|)}
\end{equation*}
and the norm in the space $H_{p ;\beta_1 ,\beta_2 }^{(\omega_1, \omega_2)}$ is defined by 
\begin{equation*}
\|f\|_{p ;\beta_1 ,\beta_2 }^{(\omega_1, \omega_2)}:=\|f\|_{p ;\beta_1 ,\beta_2 }+A(f;\omega_1, \omega_2  ;\beta_1 ,\beta_2 ).
\end{equation*}

We take $k=rm$ and $\ell =qn$ in (\ref{eq1}), to obtain 
\begin{align*}
\sigma_{m,rm;n,qn}:=&\left( 1+\frac{1}{r}\right)\left( 1+\frac{1}{q}%
\right)\sigma_{m(r+1)-1,n(q+1)-1}-\left( 1+\frac{1}{r}\right)\frac{1}{q}%
\sigma_{m(r+1)-1,n-1} \nonumber \\
& -\frac{1}{r}\left( 1+\frac{1}{q}\right)\sigma_{m-1,n(q+1)-1}+\frac{1}{%
rq}\sigma_{m-1,n-1},
\end{align*}
where $r,q\in \{2,4,6,\dots\}$.

Hence, we name the mean $\sigma _{m,rm;n,qn}(f;x,y)$ as {\it Double Even-Type Delayed Arithmetic Mean} for $%
S_{k,\ell }(f;x,y)$, which can be represented in its integral form

\begin{equation*}
\sigma_{m,rm;n,qn}(f;x,y)=\frac{1}{mn\pi^2}\int_{0}^{\pi}\int_{0}^{%
\pi}h_{x,y}(t_1,t_2)L_{m,rm;n,qn}(t_1,t_2)dt_1dt_2,
\end{equation*}
where
$$L_{m,rm;n,qn}(t_1,t_2):=\frac{4}{rq}\frac{\sin \frac{\left(r+2\right)mt_1}{2}\sin \frac{rmt_1}{2}\sin \frac{\left(q+2\right)nt_2}{2}\sin \frac{qnt_2}{2}}{\left( 4\sin \frac{t_1}{2}\sin \frac{t_2%
}{2}\right)^2}.$$

Further, we establish a more general theorem on the degree of approximation of function $f$ belonging to $H_{p ;\beta_1 ,\beta_2 }^{(\omega, \omega)}$ with norm $\|f\|_{p ;\beta_1 ,\beta_2 }^{(v,v)}$ by Double Even-Type Delayed Arithmetic Mean $\sigma_{m,rm;n,qn}(f;x,y)$. It represent a twofold generalization of Theorem \ref{the01}, both in context of the considered space and the mean.

\begin{theorem}
\label{the05} Let $\omega $ and $v$ be moduli of continuity so that $\frac{%
\omega (t)}{v(t)}$ is non-decreasing in $t$. If $f\in H_{p ;\beta_1 ,\beta_2 }^{(\omega, \omega)}$, $p\geq 1$, then 
\begin{equation*}
\Vert \sigma_{m,rm;n,qn}(f)-f\Vert _{p ;\beta_1 ,\beta_2 }^{(v,v)}= \mathcal{O} \left( r\frac{\omega (h_{1})%
}{v(h_{1})}+q\frac{\omega (h_{2})}{v(h_{2})}\right) ,
\end{equation*}%
where $h_{1}=\frac{\pi }{m}$, $h_{2}=\frac{\pi }{n}$, $m,n\in 
\mathbb{N}
$, and $r,q\in \{2,4,6,\dots\}$.
\end{theorem}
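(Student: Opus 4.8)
The plan is to transcribe the proof of Theorem~\ref{the01} almost verbatim, after observing that Theorem~\ref{the01} is exactly the case $r=q=2$ (then $\tfrac{4}{rq}=1$ and $\sin\tfrac{(r+2)mt_1}{2}\sin\tfrac{rmt_1}{2}=\sin 2mt_1\sin mt_1$, so $L_{m,2m;n,2n}$ is the kernel $S(t_1,t_2)/(4\sin\tfrac{t_1}{2}\sin\tfrac{t_2}{2})^2$ used there). Put $S_{r,q}(t_1,t_2):=\sin\tfrac{(r+2)mt_1}{2}\sin\tfrac{rmt_1}{2}\sin\tfrac{(q+2)nt_2}{2}\sin\tfrac{qnt_2}{2}$, so $L_{m,rm;n,qn}=\tfrac{4}{rq}\,S_{r,q}/(4\sin\tfrac{t_1}{2}\sin\tfrac{t_2}{2})^2$. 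Three elementary facts about this kernel are needed. First, the pointwise bound $\bigl|L_{m,rm;n,qn}(t_1,t_2)\bigr|=\mathcal O(rq\,m^2n^2)$ for $0<t_1,t_2\le\pi$, together with the sharper $L^1$-estimates $\int_0^\pi\bigl|\sin\tfrac{(r+2)mt_1}{2}\sin\tfrac{rmt_1}{2}\bigr|/(2\sin\tfrac{t_1}{2})^2\,dt_1=\mathcal O(rm)$ and its $t_2$-analogue $\mathcal O(qn)$, all of which follow from $|\sin u|\le\min\{1,|u|\}$ and $\sin\tfrac{t}{2}\ge\tfrac{t}{\pi}$ on $(0,\pi]$. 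Second, the normalization identity, which exactly as for $r=q=2$ (via $\sin A\sin B=\tfrac12(\cos(A-B)-\cos(A+B))$ and the Fej\'er integral $\int_0^\pi(1-\cos kt)/(2\sin\tfrac t2)^2\,dt=\tfrac{k\pi}{2}$) gives $D_{m,n}(x,y):=\sigma_{m,rm;n,qn}(f;x,y)-f(x,y)=\tfrac{4}{mn\pi^{2}}\int_0^\pi\!\int_0^\pi\phi_{x,y}(t_1,t_2)\,L_{m,rm;n,qn}(t_1,t_2)\,dt_1dt_2$. Third, the sign relation $\sin\tfrac{(r+2)m(t_1+h_1)}{2}\sin\tfrac{rm(t_1+h_1)}{2}=-\sin\tfrac{(r+2)mt_1}{2}\sin\tfrac{rmt_1}{2}$ (and its $t_2$-analogue), valid because $r$ is even, so $\tfrac{r\pi}{2}$ and $\tfrac{(r+2)\pi}{2}$ are integer multiples of $\pi$ of opposite parity; this is precisely what makes the substitutions $t_1\mapsto t_1+h_1$, $t_2\mapsto t_2+h_2$ of the proof of Theorem~\ref{the01} go through unchanged.

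Next I would port the two auxiliary results to the weighted space. Lemma~\ref{le1} holds for $L^p$ of an arbitrary measure, hence applies to $W(L^p((-\pi,\pi)^2);\beta_1,\beta_2)$ with no change. For the weighted analogue of Lemma~\ref{le2} I would show that, for $f\in H^{(\omega,\omega)}_{p;\beta_1,\beta_2}$, the norms $\|\phi_{x,y}(t_1,t_2)\|_{p;\beta_1,\beta_2}$ and $\|H_{x,z_1,y,z_2}(t_1,t_2)\|_{p;\beta_1,\beta_2}$ obey exactly the estimates (i)--(iv) of Lemma~\ref{le2}. Part (i) and the $\omega(|z_1|)+\omega(|z_2|)$-type bounds of (ii) come straight from the triangle inequality in $\|\cdot\|_{p;\beta_1,\beta_2}$ and the defining estimate $\|f(x+t_1,y+t_2)-f(x,y)\|_{p;\beta_1,\beta_2}=\mathcal O(\omega(|t_1|)+\omega(|t_2|))$, upon rewriting each difference that occurs in the form $\bigl(f(\text{shifted point})-f(x,y)\bigr)-\bigl(f(\text{another shifted point})-f(x,y)\bigr)$ so that the Jacobi-type weight always stays centred at $(x,y)$; the $\omega(t_1)+\omega(t_2)$-type bounds of (ii), the only ones where a translate must be moved past the weight, I would get from $|\sin\tfrac{x-s}{2}|\le|\sin\tfrac{x}{2}|+|\sin\tfrac{s}{2}|$ (hence $|\sin\tfrac{x-s}{2}|^{\beta p}\le C_{\beta p}(|\sin\tfrac{x}{2}|^{\beta p}+|\sin\tfrac{s}{2}|^{\beta p})$) together with the smallness of $|\sin\tfrac{s}{2}|^{\beta p}$ for the relevant $s$, which is the standard device in the weighted-approximation theory behind \cite{US}. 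Parts (iii) and (iv) then follow from (i)--(ii), the monotonicity of $\omega/v$, and the case analysis $t_i\le|z_i|$ versus $t_i\ge|z_i|$ ($i=1,2$), exactly as in the proof of Lemma~\ref{le2}.

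With these in place the main estimate is a transcription of the proof of Theorem~\ref{the01}. From the formula for $D_{m,n}$ above, $D_{m,n}(x+z_1,y+z_2)-D_{m,n}(x,y)=\tfrac{4}{mn\pi^{2}}\int_0^\pi\!\int_0^\pi H_{x,z_1,y,z_2}(t_1,t_2)\,L_{m,rm;n,qn}(t_1,t_2)\,dt_1dt_2$; splitting $[0,\pi]^2$ into $[0,h_1]\times[0,h_2]$, $[h_1,\pi]\times[0,h_2]$, $[0,h_1]\times[h_2,\pi]$, $[h_1,\pi]\times[h_2,\pi]$ gives $J_1,\dots,J_4$, and applying Lemma~\ref{le1} in $\|\cdot\|_{p;\beta_1,\beta_2}$ reduces each $\|J_r\|_{p;\beta_1,\beta_2}$ to the weighted Lemma~\ref{le2} bounds for $\|H_{x,z_1,y,z_2}\|_{p;\beta_1,\beta_2}$ integrated against $|L_{m,rm;n,qn}|$. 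On $[0,h_1]\times[0,h_2]$ I use the $L^1$-form of the kernel estimate; on the mixed strips and on $[h_1,\pi]\times[h_2,\pi]$ I peel off the singular factor $1/t_i^{2}$ from $1/(2\sin\tfrac{t_i}{2})^{2}$ (the bounded remainder contributing a harmless term), then invoke the sign relation through $t_1\mapsto t_1+h_1$, $t_2\mapsto t_2+h_2$ together with the very same telescoping/averaging identities of the proof of Theorem~\ref{the01} — including the thirteen-term decomposition of the $J_{44}$-analogue — so that the leading contributions become integrals of $\|H_{x,z_1,y,z_2}(t_1,t_2)-H_{x,z_1,y,z_2}(t_1+h_1,t_2)\|_{p;\beta_1,\beta_2}$, etc., controlled by Lemma~\ref{le2}(iv), while the boundary pieces over $[\pi-h_1,\pi]$ and $[\pi-h_2,\pi]$ are absorbed via $\tfrac1m\omega(\pi)\le 2\omega(h_1)$ and its $n$-analogue. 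Tracking the $\tfrac{4}{rq}$ prefactor against the kernel's $L^1$-sizes $\mathcal O(rm)$ and $\mathcal O(qn)$ is where the linear factors $r$ and $q$ of the statement enter. Summing yields $\sup_{z_1,z_2\neq0}\|D_{m,n}(x+z_1,y+z_2)-D_{m,n}(x,y)\|_{p;\beta_1,\beta_2}/(v(|z_1|)+v(|z_2|))=\mathcal O\bigl(r\tfrac{\omega(h_1)}{v(h_1)}+q\tfrac{\omega(h_2)}{v(h_2)}\bigr)$, and the same computation with $\phi_{x,y}$ and Lemma~\ref{le2}(i) in place of $H_{x,z_1,y,z_2}$ and (iii)--(iv) gives $\|D_{m,n}\|_{p;\beta_1,\beta_2}=\mathcal O(r\,\omega(h_1)+q\,\omega(h_2))$; adding the two proves the theorem.

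The genuinely new difficulty, and the main obstacle, is the weighted version of Lemma~\ref{le2}: since $\|\cdot\|_{p;\beta_1,\beta_2}$ is not translation invariant, carrying translates past the Jacobi-type weight $|\sin\tfrac x2|^{\beta_1 p}|\sin\tfrac y2|^{\beta_2 p}$ while keeping the error terms subordinate to $\omega(|z_1|)+\omega(|z_2|)$ (respectively to $\omega(t_1)+\omega(t_2)$) — so that the sensitive bounds (iii)--(iv), on which every subsequent estimate rests, survive — is the real content of the generalization. Once that is secured, everything afterwards, including the longer bookkeeping forced by general even $r$ and $q$ and the extra pieces in the $J_{44}$-type decomposition, is lengthy but routine and entirely modelled on the proof of Theorem~\ref{the01}.
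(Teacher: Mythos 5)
Your proposal follows exactly the route the paper intends: the paper's entire proof of Theorem \ref{the05} is the single remark that it ``can be done in the same lines as the proof of Theorem \ref{the01},'' and your sketch is precisely that adaptation, with the kernel $L_{m,rm;n,qn}$, the normalization identity, the sign flip under $t_i\mapsto t_i+h_i$ (valid because $r,q$ are even), and the $J_1,\dots,J_4$ decomposition all correctly identified. If anything you supply more than the paper does, in particular by flagging the two points where ``the same lines'' genuinely require extra work --- the non-translation-invariance of the weighted norm in the analogue of Lemma \ref{le2}, and the bookkeeping of the $\tfrac{4}{rq}$ prefactor against the $\mathcal O(rm)$, $\mathcal O(qn)$ kernel masses that produces the linear factors $r$ and $q$ --- neither of which the paper addresses.
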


\begin{proof}
The proof can be done in the same lines as the proof of Theorem \ref{the01}. We omit the details. 
\end{proof}

\begin{remark}
For $\beta_1 =0$, $\beta_2=0$, $r=2$ and $q=2$, Theorem \ref{the05} reduces exactly to Theorem \ref{the01}.
\end{remark}

\section{Conclusions}

For one dimension, a degree of approximation of a function in the space $H_{p}^{(\omega)}$ has been given by several authors, see \cite{DGR,DNR,D,UD2,KIM1,XhK,L,NDR,SS} and some other references already mentioned here. Inspired by these papers, we have given two corresponding results using the second (even) type double delayed arithmetic means of the Fourier series of a function from the space $H_{p}^{(\omega , \omega)}$ ($H_{p ;\beta_1 ,\beta_2 }^{(\omega, \omega)}$).

\label{lastpage}

\end{document}